\documentclass[11pt]{amsart}
\usepackage{cleveref,mathrsfs,enumitem, amssymb}
\usepackage{xcolor}%%eventually remove
\usepackage{tikz}
\usetikzlibrary{positioning}

\keywords{Nonmatrix varieties, varieties of PI-rings}
\subjclass{16R10}

\title{On nonmatrix varieties of associative rings}

\author{Thiago Castilho de Mello}
\address{Instituto de Ciência e Tecnologia, Universidade Federal de S\~ao Paulo, S\~ao Jos\'e dos Campos, SP, Brazil}
\email{tcmello@unifesp.br}
\author{Felipe Yukihide Yasumura}
\address{Department of Mathematics, Instituto de Matem\'atica, Estat\'istica e Ci\^encia da Computa\c c\~ao, Universidade de S\~ao Paulo, S\~ao Paulo, SP, Brazil}
\email{fyyasumura@ime.usp.br}

\thanks{This work is supported by S\~ao Paulo Research Foundation (FAPESP), grant 2025/02457-5 and 2024/14914-9, and by 
CNPq, grant 405779/2023-2.}

\renewcommand{\labelenumi}{(\roman{enumi})}
\newtheorem{Thm}{Theorem}
\newtheorem{Lemma}[Thm]{Lemma}
\newtheorem{Prop}[Thm]{Proposition}
\newtheorem{Cor}[Thm]{Corollary}
\theoremstyle{definition}
\newtheorem{Def}[Thm]{Definition}
\theoremstyle{remark}

\newtheorem{Remark}[Thm]{Remark}
\newtheorem{Example}[Thm]{Example}

\begin{document}
\begin{abstract}
We study nonmatrix varieties of $\mathbf{k}$-algebras, where $\mathbf{k}$ is a unital commutative ring. Our results extend to this generality known results for the case in which $\mathbf{k}$ is an infinite field. Also, we generalize these results to varieties of $\mathbf{k}$-algebras not containing the algebra of $n\times n$ matrices.

%We study varieties of $\mathbf{k}$-algebras that do not contain the algebra of $n\times n$ matrices over a field for a fixed integer $n$. This class includes all proper varieties of associative $\mathbf{k}$-algebras. Our results extend the classical theory of associative nonmatrix varieties (which concerns the case $n=2$) for arbitrary $n$, and for the broader setting in which $\mathbf{k}$ is a unital Noetherian commutative ring.
\end{abstract}
\maketitle

\section{Introduction}

Algebras satisfying polynomial identities share many properties with finite-dimensional and commutative algebras. For instance, there is a version of Hilbert's Nullstellensatz for PI-algebras \cite{Amitsur_Nullstellensatz}. This connection becomes even stronger when we restrict ourselves to algebras satisfying nonmatrix polynomial identities: even more results valid for commutative algebras hold for these algebras, for example, the sum of nilpotent elements is nilpotent. Thus, apparently, nonmatrix varieties are the varieties of associative non-commutative algebras that most closely resemble commutative ones.

Nonmatrix varieties appeared in the works by Latyshev \cite{LI,LII}. These varieties exhibit several noteworthy properties. Notably, Kemer explored the connection between nonmatrix varieties and the Specht property \cite{K79} before his groundbreaking results \cite{Kemerbook}. Additionally, Latyshev established that these algebras satisfy Hilbert's Basis Theorem \cite{L65}. In a related vein, Billig et al.~\cite{BRT97} identified a relationship between algebras satisfying nonmatrix polynomial identities and group identities, underscoring the rich structure of nonmatrix varieties. 

The works of \cite{C,MPR} have investigated equivalence formulations for nonmatrix varieties. We shall revisit their findings compellingly. Extending the concept to non-associative algebras, Shestakov and Bittencourt \cite{SB} (and also see \cite{Bit}) have broadened the definition of nonmatrix varieties for specific classes of non-associative algebras.

In this paper, we study the structure of nonmatrix varieties in greater generality, extending the theory to varieties of $\mathbf{k}$-algebras where $\mathbf{k}$ is a unital Noetherian commutative ring (\Cref{sec3}). We also generalize the construction to varieties that do not contain matrix algebras of fixed size $n$, and we obtain a characterization of these higher-degree nonmatrix varieties (\Cref{sec5}). This definition is related to the notion of a variety of \emph{complexity $n$}. For, we need to consider prime quotients of the algebras ruled by the multilinear polynomial identities of $n\times n$ matrices over $\mathbf{k}$ (\Cref{sec4}).

\section{Preliminaries}

We assume throughout that $\mathbf{k}$ is a unital commutative ring, unless stated otherwise.  
The following lemma is a key tool in the proof of our main theorem. It relies on Kaplansky’s theorem on primitive algebras satisfying a polynomial identity (see, for instance, \cite[Section 11.2.1]{AGPR}).

\begin{Lemma}\label{primitive_containsM_2}
Let $\mathscr{V}$ be a variety of associative $\mathbf{k}$-algebras satisfying a polynomial identity, where $\mathbf{k}$ is a unital commutative ring, and let $\mathcal{A}\in\mathscr{V}$ be a primitive algebra. Then either $\mathcal{A}$ is a field, or $\mathscr{V}$ contains $\mathrm{M}_2(S^{-1}\bar{\mathbf{k}})$, where $\bar{\mathbf{k}}=\mathbf{k}/I$ is a domain and $S=\bar{\mathbf{k}}\setminus\{0\}$.
\end{Lemma}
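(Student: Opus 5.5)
By Kaplansky's theorem, a primitive algebra $\mathcal{A}$ satisfying a polynomial identity is simple and finite-dimensional over its center $Z=Z(\mathcal{A})$, which is a field. Moreover, $\mathcal{A}\cong \mathrm{M}_m(D)$ for a division algebra $D$ finite-dimensional over $Z$. The plan is to reduce to the case where $\mathcal{A}$ is commutative, and otherwise to produce a copy of $\mathrm{M}_2(S^{-1}\bar{\mathbf{k}})$ inside the variety generated by $\mathcal{A}$.

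First I identify $\bar{\mathbf{k}}$. The structure map $\mathbf{k}\to\mathcal{A}$ lands in the center $Z$, because $\mathcal{A}$ is a $\mathbf{k}$-algebra and scalars commute with everything. Let $I$ be its kernel. Then $\bar{\mathbf{k}}=\mathbf{k}/I$ embeds in the field $Z$, so it is a domain. Its fraction field $F=S^{-1}\bar{\mathbf{k}}$ embeds in $Z$ as a subfield.

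Now split into cases. If $\mathcal{A}$ is commutative, then $\mathcal{A}=Z$ is a field, and we are done. If $\mathcal{A}$ is noncommutative, let $\bar Z$ be an algebraic closure of $Z$. Then $\mathcal{A}\otimes_Z\bar Z\cong \mathrm{M}_n(\bar Z)$ with $n^2=\dim_Z\mathcal{A}>1$, so $n\ge 2$. Since $\mathrm{M}_2$ embeds as a corner subalgebra in $\mathrm{M}_n$, it suffices to show that $\mathrm{M}_n(\bar Z)$, and hence $\mathrm{M}_2(\bar Z)$ and finally $\mathrm{M}_2(F)\subseteq \mathrm{M}_2(\bar Z)$, lies in $\mathscr{V}$. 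Here these are viewed as $\mathbf{k}$-algebras via $\mathbf{k}\to\bar{\mathbf{k}}\to F\subseteq \bar Z$. The last containment is automatic, since varieties are closed under subalgebras.

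The main obstacle is the passage from $\mathcal{A}$ to $\mathcal{A}\otimes_Z\bar Z$. Over an arbitrary $\mathbf{k}$ the variety $\mathscr{V}$ is defined by identities that need not be multilinear, so extending scalars does not obviously preserve them, especially when $Z$ is finite. I would handle this in one of two ways.

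The first option avoids scalar extension altogether. Over an arbitrary field, a noncommutative simple finite-dimensional algebra contains a subalgebra having $\mathrm{M}_2$ of some field as a homomorphic image. If $m\ge2$ we directly get $\mathrm{M}_2(D)\supseteq \mathrm{M}_2(F')$ for a subfield $F'\supseteq F$ of $Z$. If $m=1$, then $\mathcal{A}=D$ is a noncommutative division algebra. Take a maximal subfield $K$ of $D$; then $D\otimes_Z K\cong \mathrm{M}_d(K)$, and $D\otimes_Z K$ embeds in $D\otimes_Z D^{op}\cong\mathrm{End}_Z(D)$. To stay within the variety I would use the realization $\mathrm{M}_d(K)$ as the centralizer-type algebra $D\otimes_Z K\subseteq \mathrm{M}_d(D)$... but $\mathrm{M}_d(D)$ need not be in $\mathscr{V}$.

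So instead I would take the second option, which is the cleaner route. A finite division ring is commutative by Wedderburn, so when $m=1$ the center $Z$ is infinite. For an infinite field $Z$, all identities of the $Z$-algebra $\mathcal{A}$ (as a $\mathbf{k}$-algebra) are consequences of multihomogeneous ones and pass to $\mathcal{A}\otimes_Z L$ for any field extension $L$, by the standard Vandermonde argument. Hence $D\otimes_Z K\cong\mathrm{M}_d(K)\in\mathscr{V}$ with $d\ge2$, and then $\mathrm{M}_2(F)\subseteq\mathrm{M}_2(K)\in\mathscr{V}$.

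If $m\ge2$, no extension is needed: $\mathrm{M}_2(F)\subseteq \mathrm{M}_2(Z)\subseteq\mathrm{M}_m(Z)\subseteq \mathrm{M}_m(D)=\mathcal{A}$, again using corner embeddings and closure of $\mathscr{V}$ under subalgebras.
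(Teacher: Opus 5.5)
Your argument is correct and follows the paper's proof. Both use Kaplansky's theorem, the corner embedding $\mathrm{M}_2(S^{-1}\bar{\mathbf{k}})\subseteq\mathrm{M}_m(Z)\subseteq\mathcal{A}$ when $m\ge 2$, and, for a noncommutative division algebra, Wedderburn's theorem to force $Z$ infinite, followed by transfer of identities to a split scalar extension. The differences are cosmetic: you split $D$ over a maximal subfield $K$ rather than over an algebraic closure $\overline{Z}$ as the paper does, and your abandoned ``first option'' can simply be deleted.
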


\begin{proof}
Let $\mathcal{A}\in\mathscr{V}$ be a primitive algebra. By Kaplansky’s theorem, there exists a division $\mathbf{k}$-algebra $D$ such that
\[
\mathcal{A}\cong \mathrm{M}_n(D)
\quad\text{and}\quad
\dim_{Z(D)}\mathcal{A}<\infty,
\]
where $Z(D)$ denotes the center of $D$.

Since $D$ is a $\mathbf{k}$-algebra, there is a ring homomorphism $\mathbf{k}\to D$. Let $\bar{\mathbf{k}}$ denote its image; then $\bar{\mathbf{k}}$ is a domain. Moreover, $\bar{\mathbf{k}}\subseteq Z(D)$, and hence the fraction field $\mathbb{F}$ of $\bar{\mathbf{k}}$ satisfies $\mathbb{F}\subseteq Z(D)$. It follows that
\[
\mathrm{M}_n(\mathbb{F})\subseteq \mathcal{A}.
\]
Consequently, either $\mathcal{A}=D$ or
\[
\mathrm{M}_2(\mathbb{F})\subseteq \mathrm{M}_n(\mathbb{F})\subseteq \mathcal{A}.
\]
In the latter case, we conclude that $\mathrm{M}_2(\mathbb{F})\in\mathscr{V}$.

Thus, we may assume that $\mathcal{A}=D$ is a division algebra. Let $Z=Z(D)$ be its center. If $D=Z$, then $\mathcal{A}$ is a field, and we are done. Assume therefore that $D\neq Z$.

If $Z$ is infinite, an argument analogous to the proof of \cite[Proposition~2.2.38]{AGPR} shows that
\[
\mathrm{Id}(D)=\mathrm{Id}(D\otimes_Z \overline{Z}),
\]
where $\overline{Z}$ is an algebraic closure of $Z$. The algebra $D\otimes_Z \overline{Z}$ is then a simple algebra over an algebraically closed field, and hence contains a copy of $\mathrm{M}_2(\mathbb{F})$. Therefore, $\mathrm{M}_2(\mathbb{F})\in\mathscr{V}$.

Finally, if $Z$ is finite, then the condition $\dim_Z D<\infty$ implies that $D$ itself is a field.
\end{proof}

% \textcolor{red}{
% \begin{Lemma}
%     Let $\mathcal{A}$ be an $\mathbb{F}$-algebra, let $K$ be a unital commutative $\mathbb{F}$-algebra and $\mathcal{R}$ be a commutative $\mathbb{F}$-algebra. Assume also that $\mathcal{A}$ and $\mathcal{R}$ are $K$-algebras. Let $f\in \mathrm{Id}_{\mathbb{F}}(\mathcal A)$ and assume $K$ contains an infinite field. Then $f\in \mathrm{Id}_K(\mathcal{A}\otimes_K\mathcal{R})$.
% \end{Lemma}
% \begin{proof}
%     Let $f\in\mathrm{Id}_{\mathbb{F}}(\mathcal{A})$. Then $f\in \mathrm{Id}_K(\mathcal A)$ (since $K$ is a unital $\mathbb{F}$-algebra). Now, \cite[Proposition~2.2.38]{AGPR} implies that $f\in \mathrm{Id}_K(\mathcal{A}\otimes_K\mathcal{R})$. But since $f\in \mathbb{F}\langle X \rangle$, it follows that $f\in \mathrm{Id}_\mathbb{F}(\mathcal{A}\otimes_K\mathcal{R})$.
% \end{proof}
% %
% }

More generally, an analogous result holds in the context of prime algebras. Recall that Posner's Theorem (see, for instance, \cite[Theorem 6.5]{Drensky_Formanek}) asserts that if $\mathcal{A}$ is a prime PI $\mathbf{k}$-algebra, then its center $Z(\mathcal{A})$ is nontrivial and the localization $S^{-1}\mathcal{A}$ is simple, where $S=Z(\mathcal{A})\setminus\{0\}$. If $Z(\mathcal{A})$ is finite, then it is a field and $S^{-1}\mathcal{A}=\mathcal{A}$. Otherwise, an argument similar to the classical one for algebras over infinite fields shows that the polynomial identities of $\mathcal{A}$ are generated by its multihomogeneous identities. Moreover, $\mathcal{A}$ and $S^{-1}\mathcal{A}$ satisfy the same set of multihomogeneous $\mathbf{k}$-polynomial identities. Consequently, as $\mathbf{k}$-algebras, $\mathcal{A}$ and $S^{-1}\mathcal{A}$ are PI-equivalent.

\begin{Lemma}\label{prime_containsM_2}
Let $\mathscr{V}$ be a variety of associative $\mathbf{k}$-algebras satisfying a polynomial identity, where $\mathbf{k}$ is a unital commutative ring, and let $\mathcal{A}\in\mathscr{V}$ be a prime algebra. Then either $\mathcal{A}$ is a commutative domain, or $\mathscr{V}$ contains $\mathrm{M}_2(S^{-1}\bar{\mathbf{k}})$, where $\bar{\mathbf{k}}=\mathbf{k}/I$ is a domain and $S=\bar{\mathbf{k}}\setminus\{0\}$.
\end{Lemma}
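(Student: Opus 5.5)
The plan is to reduce to \Cref{primitive_containsM_2} by passing from $\mathcal{A}$ to its central localization, using Posner's Theorem as recalled just before the statement. Let $\mathcal{A}\in\mathscr{V}$ be prime; it is PI because $\mathscr{V}$ is. Put $Z=Z(\mathcal{A})$ and $S=Z\setminus\{0\}$. Since $\mathcal{A}$ is prime, $Z$ is a commutative domain. By Posner's Theorem, $S^{-1}\mathcal{A}$ is a simple algebra, finite dimensional over the field $S^{-1}Z$, and by the remarks preceding the statement it is PI-equivalent to $\mathcal{A}$ as a $\mathbf{k}$-algebra. Hence $S^{-1}\mathcal{A}\in\mathscr{V}$. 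This step is where I expect the real care to be needed: when $Z$ is infinite one must check that every identity of $\mathcal{A}$ holds in $S^{-1}\mathcal{A}$. I would argue that the identities of $\mathcal{A}$ are generated by multihomogeneous ones, since each multihomogeneous component is a $\mathbf{k}$-combination obtained by evaluating at scalar multiples by infinitely many central elements and using a Vandermonde argument in the domain $Z$. A multihomogeneous identity $f$ then transfers to fractions: write $f(s_1^{-1}a_1,\dots)=s_1^{-d_1}\cdots f(a_1,\dots)=0$. When $Z$ is finite, $Z$ is a finite domain, hence a field, and $S^{-1}\mathcal{A}=\mathcal{A}$, so there is nothing to do.

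Next, $S^{-1}\mathcal{A}$ is simple and unital (it is finite dimensional over a field), hence primitive. By \Cref{primitive_containsM_2}, either $\mathscr{V}$ contains $\mathrm{M}_2(S^{-1}\bar{\mathbf{k}})$ for a suitable domain $\bar{\mathbf{k}}=\mathbf{k}/I$, in which case we are done, or $S^{-1}\mathcal{A}$ is a field.

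In the latter case, $\mathcal{A}$ embeds in $S^{-1}\mathcal{A}$. The map $a\mapsto a/1$ is injective, because elements of $S$ are central non-zero-divisors in the prime algebra $\mathcal{A}$: if $sa=0$ then $s\mathcal{A}a=\mathcal{A}sa=0$, so $a=0$. Thus $\mathcal{A}$ is a subring of a field, and therefore a commutative domain. This proves the dichotomy.
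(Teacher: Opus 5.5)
Your proposal is correct and follows essentially the same route as the paper: pass to the central localization $S^{-1}\mathcal{A}$ via Posner's Theorem, transfer the identities of $\mathcal{A}$ to it (the multihomogeneity argument the paper sketches just before the lemma), and apply \Cref{primitive_containsM_2}. You also make explicit a step the paper leaves implicit. In the field case, $\mathcal{A}$ embeds in $S^{-1}\mathcal{A}$ because nonzero central elements of a prime algebra are non-zero-divisors (if $\mathcal{A}$ is nonunital, read $a/1$ as $as/s$), so $\mathcal{A}$ is a subring of a field and hence a commutative domain.
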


\begin{proof}
By Posner’s theorem, the localization $S^{-1}\mathcal{A}$ is a unital simple  $\mathbf{k}$-algebra satisfying the same polynomial identities as $\mathcal{A}$, where $S=Z(\mathcal{A})\setminus\{0\}$. In particular, it is a primitive algebra. Applying \Cref{primitive_containsM_2}, we conclude that either $S^{-1}\mathcal{A}$ is commutative or $\mathscr{V}$ contains $\mathrm{M}_2(\mathbb{F})$ for some field $\mathbb{F}$.
\end{proof}

For any $\mathbf{k}$-algebra $\mathcal{A}$, let $\mathrm{Nil}(\mathcal{A})$ denote the sum of all nil ideals of $\mathcal{A}$ (equivalently, the maximal nil ideal of $\mathcal{A}$), and let $\mathcal{B}(\mathcal{A})$ denote the intersection of all prime ideals of $\mathcal{A}$. In general, one has
\[
\mathcal{B}(\mathcal{A}) \subseteq \mathrm{Nil}(\mathcal{A}),
\]
and the two coincide when $\mathcal{A}$ is commutative. The set $\mathcal{B}(\mathcal{A})$ coincides with the set of all strongly nilpotent elements of $\mathcal{A}$.

We shall make use of the following result.

\begin{Lemma}[{\cite[Theorem 16.1.1]{AGPR}}]\label{nil_nilpotent} Let $\mathbf{k}$ be a unital Noetherian commutative ring and $\mathcal{A}$ a finitely generated $\mathbf{k}$-algebra satisfying a polynomial identity. Then, $\mathrm{Nil}(\mathcal{A})$ is nilpotent.
\end{Lemma}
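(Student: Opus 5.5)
The plan is to deduce the statement from the Braun--Kemer--Razmyslov theorem, which says that the Jacobson radical $J(\mathcal{A})$ of a finitely generated PI-algebra over a unital Noetherian commutative ring is nilpotent. We split the argument into a trivial reduction and the nilpotence of $J(\mathcal{A})$. The reduction is that every nil ideal is quasi-regular, so $\mathrm{Nil}(\mathcal{A})\subseteq J(\mathcal{A})$; it therefore suffices to show that $J(\mathcal{A})$ is nilpotent. If one prefers to avoid the full strength of Braun's theorem, there is a lighter route for $\mathrm{Nil}(\mathcal{A})$ alone. By the Amitsur--Levitzki type results for PI-algebras, a nil ideal of a PI-algebra is locally nilpotent, so $\mathrm{Nil}(\mathcal{A})$ is the locally nilpotent (Levitzki) radical and in particular is contained in every prime ideal. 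Hence $\mathrm{Nil}(\mathcal{A})=\mathcal{B}(\mathcal{A})$, and one needs nilpotence of the prime radical of a finitely generated PI-algebra over a Noetherian ring. In either formulation the core input is the same.

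For the core step I would follow Razmyslov's strategy, with Kemer's reduction to the Capelli case. First, $\mathcal{A}$ satisfies some Capelli identity $c_m$; over a field this is Kemer's theorem, and over a Noetherian $\mathbf{k}$ one reduces to that case by working modulo the finitely many associated primes of $\mathbf{k}$ and localizing. Next, induct on $m$. Let $\mathcal{I}$ be the T-ideal (in $\mathcal{A}$) generated by evaluations of $c_{m-1}$. Then $\mathcal{A}/\mathcal{I}$ satisfies $c_{m-1}$, so by induction its radical is nilpotent. It remains to show that $J(\mathcal{A})^N\cap\mathcal{I}$ is nilpotent. For this, embed the part of $\mathcal{A}$ generated by $\mathcal{I}$ into an algebra over a commutative ring obtained by adjoining characteristic coefficients. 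This is Razmyslov's trace construction: $c_m$ together with $c_{m-1}$-values yields Cayley--Hamilton-type relations. By Shirshov's height theorem, the resulting algebra is a finitely generated module over a finitely generated, hence Noetherian, commutative $\mathbf{k}$-algebra. Radicals of such module-finite algebras are nilpotent, since they behave like finite-dimensional algebras after localization and Nakayama's lemma. Pulling back gives nilpotence of the intersection, and combining it with the inductive hypothesis gives nilpotence of $J(\mathcal{A})$.

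The main obstacle is the Razmyslov step: constructing the commutative ring of ``traces'' over a general Noetherian $\mathbf{k}$, rather than over a field of characteristic zero, so that it remains Noetherian and $\mathcal{I}$ becomes a module-finite object over it. I would first try Braun's approach, which avoids traces. One passes to the finitely many minimal primes, which form a finite set because of the ACC on T-ideals in the relevant range. Posner's theorem, as recalled before \Cref{prime_containsM_2}, then makes the prime quotients orders in central simple algebras, and central polynomials supply the module-finite structure. Given the depth of this step, in the paper itself I would simply invoke \cite[Theorem 16.1.1]{AGPR} together with the inclusion $\mathrm{Nil}(\mathcal{A})\subseteq J(\mathcal{A})$.
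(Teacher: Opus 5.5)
\textbf{There is a genuine gap: your main reduction aims at a false statement.}

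You assert that the Braun--Kemer--Razmyslov theorem makes $J(\mathcal{A})$ nilpotent for every finitely generated PI-algebra over an arbitrary unital Noetherian commutative ring. This fails. Take $\mathbf{k}=\mathbb{Z}_{(p)}$ (or $\mathbb{F}[[t]]$) and $\mathcal{A}=\mathbf{k}$. Then $\mathcal{A}$ is finitely generated and commutative, and $J(\mathcal{A})=p\mathbf{k}$. So $J(\mathcal{A})^n=p^n\mathbf{k}\neq 0$ for every $n$, although $\mathrm{Nil}(\mathcal{A})=0$.

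The inclusion $\mathrm{Nil}(\mathcal{A})\subseteq J(\mathcal{A})$ is correct, but it reduces the lemma to something false. The statement ``$J(\mathcal{A})$ is nilpotent'' needs $\mathbf{k}$ to be a Jacobson ring, where $J(\mathcal{A})=\mathrm{Nil}(\mathcal{A})$. This is exactly why the paper keeps \Cref{nil_nilpotent} (Noetherian, nil radical) separate from \Cref{jacobsonnilpotent} (Jacobson Noetherian, Jacobson radical). The paper's proof of the present lemma is just the citation of the nil-radical version, with no passage through $J(\mathcal{A})$.

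The same error is built into your core step. The claim ``radicals of module-finite algebras are nilpotent'' fails for $J$ by the same example, and your induction is set up to conclude nilpotence of $J(\mathcal{A})$. So your two formulations do not rest on the same core input; only the nil-radical one is true.

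\textbf{How to repair it.} What you call the lighter route is in fact the only viable one. Prove $\mathrm{Nil}(\mathcal{A})=\mathcal{B}(\mathcal{A})$, and run the induction with nil (equivalently prime) radicals throughout. In the module-finite step, the correct fact is this: an algebra that is a finitely generated module over a Noetherian commutative ring is itself a Noetherian ring, so its prime radical is nilpotent by Levitzki's theorem.

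Your justification of $\mathrm{Nil}(\mathcal{A})=\mathcal{B}(\mathcal{A})$ also needs repair. Local nilpotence alone does not put an ideal inside every prime ideal; in general the Levitzki radical can properly contain the prime radical. The right reason is that a prime PI-algebra has no nonzero nil ideals. Every nonzero ideal of such an algebra meets the center nontrivially (Rowen), and the center of a prime ring has no zero divisors, hence no nonzero nilpotent elements.

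Finally, your reduction of the Capelli identity to the field case ``modulo the associated primes of $\mathbf{k}$ and localizing'' is too vague to carry any weight. Like the trace construction, it is a deep input that must come from the literature. That is why citing the nil-radical form of the theorem, as the paper does, is the appropriate proof here.
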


Recall that a Jacobson ring is a ring in which every prime homomorphic image has trivial Jacobson radical. For example, fields and $\mathbb{Z}$ are Jacobson rings. With this terminology, the previous result can be reformulated as follows:

\begin{Lemma}[{\cite[Theorem 1.8]{KBR}}]\label{jacobsonnilpotent}
%%Theorem 1.8 de https://arxiv.org/pdf/1405.0730
Let $\mathbf{k}$ be a unital Jacobson Noetherian commutative ring and $\mathcal{A}$ a finitely generated PI-algebra over $\mathbf{k}$. Then, the Jacobson radical of $\mathcal{A}$ is nilpotent.
\end{Lemma}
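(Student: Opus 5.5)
The claim is that over a Noetherian Jacobson base the Jacobson radical, and not only the nil radical, of a finitely generated PI-algebra is nilpotent. My plan is to reduce this to \Cref{nil_nilpotent} by proving that $J(\mathcal{A})=\mathrm{Nil}(\mathcal{A})$ for such $\mathcal{A}$. One inclusion is standard: every nil ideal is quasi-regular, so $\mathrm{Nil}(\mathcal{A})\subseteq J(\mathcal{A})$. Once $J(\mathcal{A})\subseteq \mathcal{B}(\mathcal{A})\subseteq\mathrm{Nil}(\mathcal{A})$ is established, \Cref{nil_nilpotent} makes $J(\mathcal{A})$ nilpotent.

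For the reverse inclusion it suffices to show that $J(\mathcal{A}/P)=0$ for every prime ideal $P$ of $\mathcal{A}$. Indeed, $J(\mathcal{A})$ maps into $J(\mathcal{A}/P)$ because the image of a quasi-regular ideal under a surjection is quasi-regular. Then $J(\mathcal{A})\subseteq\bigcap P=\mathcal{B}(\mathcal{A})$. So I fix a prime $P$ and set $R=\mathcal{A}/P$, a finitely generated prime PI-algebra over $\mathbf{k}$. Replacing $\mathbf{k}$ by its image $\bar{\mathbf{k}}$ in $R$ keeps it Noetherian and Jacobson, since both properties pass to quotients. Moreover $\bar{\mathbf{k}}$ is a domain, being a central subring of a prime ring acting faithfully. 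The task is now to show that $R$ is a Jacobson ring, or at least that $J(R)=0$. This is the \emph{generic freeness / Nullstellensatz} step, and I expect it to be the main obstacle.

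For this step I would invoke the PI version of the Hilbert Nullstellensatz already cited in the introduction \cite{Amitsur_Nullstellensatz}. Its content is that a finitely generated PI-algebra over a commutative Jacobson ring is again a Jacobson ring. More precisely, one route runs as follows.
\begin{enumerate}
\item By Posner's theorem, $R$ has nonzero center $Z$ and a nonzero central polynomial evaluation $c\in Z$.
\item Using Shirshov's height theorem, the finitely generated algebra $R$ becomes, after inverting a suitable nonzero central element, a finite module over a finitely generated commutative $\bar{\mathbf{k}}$-algebra $C\subseteq Z$.
\item $C$ is a Jacobson ring by the commutative Nullstellensatz over the Jacobson ring $\bar{\mathbf{k}}$.
\item Jacobson-ness transfers along finite centralizing extensions, so primitive ideals of $R$ separate points and $J(R)=0$.
\end{enumerate}
If this route turns out too long, I will simply quote the Amitsur--Procesi result that finitely generated PI-algebras over Jacobson rings are Jacobson. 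That result directly gives $J(R)=0$ for every prime quotient $R$.

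Finally, I combine the pieces: $J(\mathcal{A})\subseteq\mathcal{B}(\mathcal{A})\subseteq\mathrm{Nil}(\mathcal{A})$, and \Cref{nil_nilpotent} (which needs $\mathbf{k}$ Noetherian and $\mathcal{A}$ finitely generated PI) shows that $\mathrm{Nil}(\mathcal{A})$ is nilpotent. Hence $J(\mathcal{A})$ is nilpotent, and in fact $J(\mathcal{A})=\mathcal{B}(\mathcal{A})=\mathrm{Nil}(\mathcal{A})$.
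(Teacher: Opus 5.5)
Your proposal is correct and matches the paper, which calls this lemma a reformulation of \Cref{nil_nilpotent} and simply cites it. Finitely generated PI-algebras over a Jacobson base are themselves Jacobson rings, so $J(\mathcal{A})=\mathcal{B}(\mathcal{A})=\mathrm{Nil}(\mathcal{A})$, and the Noetherian nilpotence result finishes the proof; the right reference for the Jacobson step is Amitsur--Procesi, not Amitsur's 1957 paper. Your sketch (1)--(4) does not explain how $J(R)=0$ is recovered from the localization at $c$, so quoting Amitsur--Procesi, as you propose as a fallback, is the cleaner way to write it.
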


\newpage\section{Nonmatrix variety over rings\label{sec3}}
We extend the characterization of nonmatrix varieties to the setting of $\mathbf{k}$-algebras, where $\mathbf{k}$ is a unital commutative ring.

\begin{Thm}\label{characterization_ringNoetherian}
Let $\mathbf{k}$ be a unital commutative ring, and let $\mathscr{V}$ be a variety of associative $\mathbf{k}$-algebras. The following conditions are equivalent:
\begin{enumerate}[label=(\roman{enumi})]
\item\label{I.def} Every simple $\mathbf{k}$-algebra in $\mathscr{V}$ is a field.
\item\label{I.M_2} $\mathrm{M}_2(\mathbb{F})\notin\mathscr{V}$ for every fraction field $\mathbb{F}$ of a prime homomorphic image of $\mathbf{k}$.
\item\label{I.subdirectJac} For every $\mathcal{A}\in\mathscr{V}$, the quotient $\mathcal{A}/J(\mathcal{A})$ is a subdirect product of fields.
\item\label{I.subdirect} For every $\mathcal{A}\in\mathscr{V}$, the quotient $\mathcal{A}/\mathcal{B}(\mathcal{A})$ is a subdirect product of commutative domains.
\item\label{I.powercommutator} There exists $m\in\mathbb{N}$ such that
\[
[x_1,x_2]^m \in \mathrm{Id}(\mathscr{V}).
\]
\item\label{I.jacobson} For every $\mathcal{A}\in\mathscr{V}$,
\[
\mathcal{B}(\mathcal{A})=\{a\in\mathcal{A}\mid \text{$a$ is nilpotent}\}.
\]
\item\label{I.jacobsonfg} For every finitely generated $\mathcal{A}\in\mathscr{V}$,
\[
\mathcal{B}(\mathcal{A})=\{a\in\mathcal{A}\mid \text{$a$ is nilpotent}\}.
\]
\item\label{I.someideal} For any $\mathcal{A}\in\mathscr{V}$ and any nilpotent element $a\in\mathcal{A}$, at least one of the following ideals is nil:
\[
a\mathcal{A},\quad a\mathcal{A}+\mathbf{k}a,\quad \mathcal{A}a,\quad \mathcal{A}a+\mathbf{k}a,\quad \mathcal{A}a\mathcal{A},\quad
\mathcal{A}a\mathcal{A}+\mathcal{A}a+a\mathcal{A}+\mathbf{k}a.
\]
\item\label{I.allideals} For any $\mathcal{A}\in\mathscr{V}$ and any nilpotent element $a\in\mathcal{A}$, all of the following ideals are nil:
\[
a\mathcal{A},\quad a\mathcal{A}+\mathbf{k}a,\quad \mathcal{A}a,\quad \mathcal{A}a+\mathbf{k}a,\quad \mathcal{A}a\mathcal{A},\quad
\mathcal{A}a\mathcal{A}+\mathcal{A}a+a\mathcal{A}+\mathbf{k}a.
\]
\item\label{I.sumnilelements} For every $\mathcal{A}\in\mathscr{V}$ and any nilpotent elements $a,b\in\mathcal{A}$, the element $a+b$ is nilpotent;
\item For every $\mathcal{A}\in\mathscr{V}$, any nilpotent element of $\mathcal{A}$ is strongly nilpotent;

%\item The variety $\mathscr{V}$ has complexity $0$ or $1$;

\item  For each $\mathcal{A}\in\mathscr{V}$ and nilpotent elements $a_1,\dots,a_m \in \mathcal{A}$, the subalgebra generated by $a_1,\dots,a_m$ is nilpotent.
\newcounter{contador1}
\setcounter{contador1}{\arabic{enumi}}
\end{enumerate}
\end{Thm}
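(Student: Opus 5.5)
We will prove the equivalences by a chain that puts the polynomial condition \ref{I.powercommutator} at the centre. The outline is \ref{I.def}$\Leftrightarrow$\ref{I.M_2}, then \ref{I.M_2}$\Rightarrow$\ref{I.subdirect}$\Rightarrow$\ref{I.powercommutator}, then \ref{I.powercommutator}$\Rightarrow$ all the nilpotency statements, and finally a return from these to \ref{I.M_2}.

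\textbf{Step 1: the equivalences among \ref{I.def}--\ref{I.subdirect}.}
\begin{itemize}
\item \ref{I.def}$\Rightarrow$\ref{I.M_2} holds because $\mathrm{M}_2(\mathbb{F})$ is simple and not a field.
\item For \ref{I.M_2}$\Rightarrow$\ref{I.def}, first note that \ref{I.M_2} forces $\mathscr{V}$ to be PI. Otherwise $\mathscr{V}$ contains the relatively free algebra, and hence every algebra in the variety of all $\mathbf{k}$-algebras, including $\mathrm{M}_2$ over the fraction field of $\mathbf{k}/\mathfrak{p}$ for any prime $\mathfrak{p}$. A simple algebra is primitive, so \Cref{primitive_containsM_2} gives the claim.
\item \ref{I.M_2}$\Rightarrow$\ref{I.subdirect} follows from \Cref{prime_containsM_2}, since $\mathcal{A}/\mathcal{B}(\mathcal{A})$ embeds in $\prod_P \mathcal{A}/P$ over the prime ideals $P$.
\item \ref{I.M_2}$\Rightarrow$\ref{I.subdirectJac} follows in the same way from \Cref{primitive_containsM_2}, using primitive ideals.
\item \ref{I.subdirectJac}$\Rightarrow$\ref{I.M_2} holds since $J(\mathrm{M}_2(\mathbb{F}))=0$ and $\mathrm{M}_2(\mathbb{F})$ is not commutative.
\end{itemize}

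\textbf{Step 2: \ref{I.subdirect}$\Rightarrow$\ref{I.powercommutator}.}
Take $\mathcal{A}$ to be the relatively free algebra of $\mathscr{V}$ on $x_1,x_2$. By \ref{I.subdirect}, the commutator $[x_1,x_2]$ lies in $\mathcal{B}(\mathcal{A})$, so it is nilpotent. The index of nilpotency is a uniform $m$ because we are working in the relatively free algebra, and $[x_1,x_2]^m=0$ there means exactly that $[x_1,x_2]^m\in\mathrm{Id}(\mathscr{V})$. Conversely, \ref{I.powercommutator} implies \ref{I.M_2}. Indeed, in $\mathrm{M}_2(\mathbb{F})$ we have $[e_{11},e_{12}]=e_{12}$, which is nilpotent, so that pair does not help. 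Instead take $x_1=e_{12}$ and $x_2=e_{21}$: then $[x_1,x_2]=e_{11}-e_{22}$, and its square is the identity, so no power of it vanishes.

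\textbf{Step 3: \ref{I.powercommutator} implies the nilpotency statements.}
This is the heart of the theorem and the step I expect to be the main obstacle.
\begin{itemize}
\item First I show that every nilpotent $a$ lies in every prime ideal $P$ of every $\mathcal{A}\in\mathscr{V}$. The algebra $\mathcal{A}/P$ is prime and satisfies $[x,y]^m$. If $\mathcal{A}/P$ were not commutative, \Cref{prime_containsM_2} would give $\mathrm{M}_2(\mathbb{F})\in\mathscr{V}$, contradicting Step 2. So $\mathcal{A}/P$ is a commutative domain, and nilpotents die in it. This gives \ref{I.jacobson}, hence \ref{I.jacobsonfg}.
\item Since $\mathcal{B}(\mathcal{A})$ is exactly the set of strongly nilpotent elements, \ref{I.jacobson} yields statement (xi).
\item Because $\mathcal{B}(\mathcal{A})$ is an ideal contained in $\mathrm{Nil}(\mathcal{A})$, \ref{I.jacobson} also yields \ref{I.allideals}, hence \ref{I.someideal}, and it yields \ref{I.sumnilelements}.
\item For (xii), the subalgebra generated by $a_1,\dots,a_m$ lies in $\mathcal{B}$. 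I plan to use strong nilpotency directly: strongly nilpotent elements generate a subalgebra that is locally nilpotent. If this is awkward over a general $\mathbf{k}$, I would pass to the finitely generated subalgebra and use the nilpotence of $\mathrm{Nil}$ from \Cref{nil_nilpotent}. That needs Noetherian hypotheses, so I would first reduce to $\mathbf{k}=\mathbb{Z}$-type images. Finding the cleanest route here is the point I expect to take the most care.
\end{itemize}

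\textbf{Step 4: closing the loop.}
Each of \ref{I.jacobsonfg}, \ref{I.someideal}, \ref{I.sumnilelements}, (xi) and (xii) fails for $\mathrm{M}_2(\mathbb{F})$, so each implies \ref{I.M_2}:
\begin{itemize}
\item $e_{12}$ is nilpotent, but $e_{12}+e_{21}$ is not, so \ref{I.sumnilelements} fails.
\item $e_{12}\mathrm{M}_2(\mathbb{F})$ contains the idempotent $e_{11}$, so $e_{12}\mathrm{M}_2(\mathbb{F})$ is not nil. Similarly each of the listed ideals generated by $e_{12}$ contains a non-nilpotent element, so \ref{I.someideal} fails.
\item $\mathcal{B}(\mathrm{M}_2(\mathbb{F}))=0$, so \ref{I.jacobsonfg} fails. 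Here $\mathrm{M}_2(\mathbb{F})$ may not be finitely generated over $\mathbf{k}$. I would replace it by the finitely generated subalgebra $\mathrm{M}_2(\bar{\mathbf{k}})$. It is prime, so its $\mathcal{B}$ is zero, and it still contains $e_{12}$.
\item For (xi) and (xii), the pair $e_{12},e_{21}$ again gives a counterexample.
\end{itemize}
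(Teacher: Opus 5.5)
Your proposal is correct and follows essentially the paper's argument. The two lemmas on primitive and prime algebras drive the structural implications, the relatively free algebra on $x_1,x_2$ gives (v), and $e_{12},e_{21}\in\mathrm{M}_2(\mathbb{F})$ close every loop. The differences are only bookkeeping: you get (ix) directly from $\mathcal{B}(\mathcal{A})$ being a nil ideal, and you close (vii) with the finitely generated prime algebra $\mathrm{M}_2(\bar{\mathbf{k}})$, whereas the paper routes (vii) through (ix) via $\mathrm{alg}(a,b)$.

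For (xii) you can drop the Noetherian fallback entirely. $\mathcal{B}(\mathcal{A})$ lies in the Levitzki (locally nilpotent) radical, so any finitely many elements of it generate a nilpotent subalgebra, and this is exactly the step the paper calls clear.
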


\begin{proof}
The implications are summarized in the following diagram:

\begin{tikzpicture}[>=stealth, node distance=22mm]

% NODES
\node (i) {(i)};
\node (ii) [right=of i] {(ii)};
\node (iv) [below=of ii] {(iv)};
\node (iii) [left=of iv] {(iii)};
\node (v) [right=of iv] {(v)};

% terceira linha
\node (x) [below=of iii] {(x)};
\node (vi) [below=of iv] {(vi)};
\node (vii) [right=of vi] {(vii)};
\node (ix) [right=of vii] {(ix)};
\node (viii) [right=of ix] {(viii)};

% quarta linha
\node (xi) [below=of vi] {(xi)};
\node (xii) [below=of x] {(xii)};

% ARROWS
\draw[->] (i) -- (ii);
\draw[->] (ii) -- (i);

\draw[->] (ii) -- (iv);
\draw[->] (iv) -- (iii);
\draw[->] (iii) -- (ii);

\draw[->] (iv) -- (v);
\draw[->] (v) -- (ii);

\draw[->] (iv) -- (vi);
\draw[->] (vi) -- (vii);
\draw[->] (vii) -- (ix);
\draw[->] (ix) -- (viii);
\draw[->] (vi) -- (x);
\draw[->] (viii) .. controls +(0,40mm) and +(+40mm,0) .. (ii);

\draw[->] (x) -- (ii);

\draw[->] (vi) -- (xi);
\draw[->] (xi) -- (vi);

\draw[->] (vi) -- (xii);
\draw[->] (xii) -- (ii);

\end{tikzpicture}
\ref{I.def}$\Rightarrow$\ref{I.M_2}: This is immediate.

\medskip
\ref{I.M_2}$\Rightarrow$\ref{I.def}: 
Let $\mathcal{A}\in\mathscr{V}$ be a simple $\mathbf{k}$-algebra. By \Cref{prime_containsM_2}, either $\mathcal{A}$ is a commutative domain or $\mathscr{V}$ contains $\mathrm{M}_2(S^{-1}\bar{\mathbf{k}})$, where $\bar{\mathbf{k}}$ is a prime homomorphic image of $\mathbf{k}$ and $S=\bar{\mathbf{k}}\setminus\{0\}$. The latter is excluded by assumption. Hence, $\mathcal{A}$ is a simple commutative domain, and therefore a field.

\medskip
\ref{I.M_2}$\Rightarrow$\ref{I.subdirect}: 
It is well known that $\mathcal{A}/\mathcal{B}(\mathcal{A})$ is a subdirect product of prime algebras, each of which belongs to $\mathscr{V}$ and satisfies the same identities of its central localization, which is a simple algebra over its center. Since $\mathscr{V}$ does not contain $\mathrm{M}_2(\mathbb{F})$ for any field $\mathbb{F}$, \Cref{prime_containsM_2} implies that each of these prime algebras must be a commutative domain.

\medskip
\ref{I.subdirect}$\Rightarrow$\ref{I.powercommutator}: Consider the relatively free algebra $\mathbf{k}_{\mathscr{V}}\langle x_1,x_2\rangle$. Then, the quotient $\mathbf{k}_{\mathscr{V}}\langle x_1,x_2\rangle/\mathcal{B}(\mathbf{k}_{\mathscr{V}}\langle x_1,x_2\rangle)$ satisfies $[x_1,x_2]$. Since $\mathcal{B}(\mathbf{k}_{\mathscr{V}}\langle x_1,x_2\rangle$ is nil, there exists $m\in\mathbb{N}$ such that $[x_1,x_2]^m=0$. Therefore, $\mathscr{V}$ satisfies $[x_1,x_2]^m$.

\medskip
\ref{I.powercommutator}$\Rightarrow$\ref{I.M_2}: 
Observe that $[e_{12},e_{21}]^n\neq 0$ for every $n\in\mathbb{N}$. Thus, $\mathrm{M}_2(\mathbb{F})$ does not satisfy any power of the commutator polynomial, for any field $\mathbb{F}$. Hence, $\mathrm{M}_2(\mathbb{F})\notin\mathscr{V}$.

\medskip
\ref{I.subdirect}$\Rightarrow$\ref{I.jacobson}: 
Let $\mathcal{A}\in\mathscr{V}$ and let $a\in\mathcal{A}$ be nilpotent. Then $a+\mathcal{B}(\mathcal{A})$ is nilpotent in $\mathcal{A}/\mathcal{B}(\mathcal{A})$. Since $\mathcal{A}/\mathcal{B}(\mathcal{A})$ is a subdirect product of commutative domains, it follows that $a+\mathcal{B}(\mathcal{A})=0$. Hence, $a\in\mathcal{B}(\mathcal{A})$.

\medskip
\ref{I.jacobson}$\Rightarrow$\ref{I.jacobsonfg}: This is immediate.

\medskip
\ref{I.jacobsonfg}$\Rightarrow$\ref{I.allideals}: 
Let $\mathcal{A}\in\mathscr{V}$ and let $a\in\mathcal{A}$ be nilpotent. We show that $a\mathcal{A}$ is a nil ideal. It suffices to prove that $ab$ is nilpotent for every $b\in\mathcal{A}$. Set $\mathcal{A}_0=\mathrm{alg}(a,b)$. Then $\mathcal{A}_0\in\mathscr{V}$ is finitely generated. By assumption, $\mathcal{B}(\mathcal{A}_0)$ consists precisely of the nilpotent elements. Since $a\in\mathcal{B}(\mathcal{A}_0)$, it follows that $ab\in\mathcal{B}(\mathcal{A}_0)$ as well, and hence $ab$ is nilpotent. The same argument applies to all other listed ideals.

\medskip
\ref{I.allideals}$\Rightarrow$\ref{I.someideal}: This is immediate.

\medskip
\ref{I.someideal}$\Rightarrow$\ref{I.M_2}: 
The element $e_{12}\in\mathrm{M}_2(\mathbb{F})$ is nilpotent; however, none of the one-sided or two-sided ideals it generates is nil. Hence, $\mathrm{M}_2(\mathbb{F})\notin\mathscr{V}$.

\medskip
\ref{I.jacobson}$\Rightarrow$\ref{I.sumnilelements}: 
If $a,b\in\mathcal{A}$ are nilpotent, then $a,b\in\mathcal{B}(\mathcal{A})$. Since $\mathcal{B}(\mathcal{A})$ is an ideal, it follows that $a+b\in\mathcal{B}(\mathcal{A})$, and thus $a+b$ is nilpotent.

\medskip
\ref{I.sumnilelements}$\Rightarrow$\ref{I.M_2}: 
For any field $\mathbb{F}$, the algebra $\mathrm{M}_2(\mathbb{F})$ does not satisfy \ref{I.sumnilelements}, since $e_{12}$ and $e_{21}$ are nilpotent, whereas their sum is not.

\medskip
\ref{I.subdirect}$\Rightarrow$\ref{I.subdirectJac}: 
Since $\mathcal{B}(\mathcal{A})\subseteq J(\mathcal{A})$, the quotient $\mathcal{A}/J(\mathcal{A})$ is a homomorphic image of $\mathcal{A}/\mathcal{B}(\mathcal{A})$. In particular, $\mathcal{A}/J(\mathcal{A})$ is commutative. Therefore, it is a subdirect product of primitive commutative algebras, that is, fields.

\medskip
\ref{I.subdirectJac}$\Rightarrow$\ref{I.M_2}: This is immediate.

\medskip
(xi)$\iff$(vi)$\Rightarrow$(xii): clear.

\medskip
(xii)$\Rightarrow$(ii): the elements $e_{12}$ and $e_{21}$ are nilpotent; however they do not generate a nilpotent algebra in $\mathrm{M}_2(\mathbb{F})$.
\end{proof}

\begin{Remark}\label{remarknovo}
Consider the following statement:
\begin{enumerate}
\setcounter{enumi}{\arabic{contador1}}
\item\label{I.commutators} For every finitely generated $\mathcal{A}\in\mathscr{V}$, there exists $m\in\mathbb{N}$ such that
\[
[x_1,x_2]\cdots[x_{2m-1},x_{2m}] \in \mathrm{Id}(\mathcal{A}).
\]
\end{enumerate}
The above statement implies \ref{I.powercommutator} of the previous theorem: the relatively free algebra $\mathbf{k}_{\mathscr{V}}\langle x_1,x_2\rangle$ is finitely generated. Hence, it satisfies an identity of the form $[x_1,x_2]^m=0$ for some $m\in\mathbb{N}$. Therefore, $\mathscr{V}$ satisfies $[x_1,x_2]^m$.

Conversely, if, in addition, $\mathbf{k}$ is Noetherian, them \ref{I.subdirect} of the previous theorem implies the above statement: let $\mathcal{A}\in\mathscr{V}$ be finitely generated. By \Cref{nil_nilpotent}, the ideal $\mathrm{Nil}(\mathcal{A})$ is nilpotent; hence, $\mathrm{Nil}(\mathcal{A})^m=0$ for some $m\in\mathbb{N}$. Since $\mathcal{B}(\mathcal{A})\subseteq \mathrm{Nil}(\mathcal{A})$, it follows that $\mathcal{B}(\mathcal{A})^m=0$ as well. Moreover, $\mathcal{A}/\mathcal{B}(\mathcal{A})$ is a subdirect product of commutative algebras, and therefore satisfies the identity $[x_1,x_2]=0$. Consequently, the product of $m$ commutators is a polynomial identity for $\mathcal{A}$.
\end{Remark}
Condition~\emph{(ii)} of the above theorem motivates the terminology: we say that a variety $\mathscr{V}$ is \emph{nonmatrix} if it satisfies this condition. Accordingly, we say that a $\mathbf{k}$-algebra satisfies a \emph{nonmatrix polynomial identity} if it generates a nonmatrix variety.

Conditions~\emph{(vi)}--\emph{(x)} are satisfied by all commutative algebras. Hence, algebras satisfying a nonmatrix polynomial identity share several structural properties with commutative algebras. We single out the following consequence.

\begin{Cor}
Let $\mathbf{k}$ be a unital commutative ring, and let $\mathcal{A}$ be a $\mathbf{k}$-algebra satisfying a nonmatrix polynomial identity. Then
\[
\mathcal{B}(\mathcal{A})
=
\{\,a\in\mathcal{A}\mid \text{$a$ is nilpotent}\,\}.
\]\qed
\end{Cor}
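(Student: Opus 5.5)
The plan is to read the Corollary off \Cref{characterization_ringNoetherian}. Let $\mathscr{V}$ be the variety generated by $\mathcal{A}$. By the definition just given, ``$\mathcal{A}$ satisfies a nonmatrix polynomial identity'' means that $\mathscr{V}$ satisfies condition~\ref{I.M_2}. Applying the theorem to $\mathscr{V}$, condition~\ref{I.M_2} gives condition~\ref{I.jacobson}, via the chain \ref{I.M_2}$\Rightarrow$\ref{I.subdirect}$\Rightarrow$\ref{I.jacobson}. Since $\mathcal{A}\in\mathscr{V}$, condition~\ref{I.jacobson} applied to $\mathcal{A}$ is exactly the displayed equality.

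For readability, I would also spell out the two inclusions directly.

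\emph{Inclusion $\subseteq$.} This holds for any algebra, and I would cite it as a standard fact. The ideal $\mathcal{B}(\mathcal{A})$ consists of strongly nilpotent elements, so it is contained in $\mathrm{Nil}(\mathcal{A})$, whose elements are nilpotent.

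\emph{Inclusion $\supseteq$.} Let $P$ be a prime ideal of $\mathcal{A}$. Then $\mathcal{A}/P$ is a prime algebra in $\mathscr{V}$. By \Cref{prime_containsM_2}, either $\mathcal{A}/P$ is a commutative domain, or $\mathscr{V}$ contains $\mathrm{M}_2(\mathbb{F})$ for $\mathbb{F}$ the fraction field of a prime image of $\mathbf{k}$. The second alternative is excluded by condition~\ref{I.M_2}. Hence $\mathcal{A}/P$ is a commutative domain. A nilpotent $a$ then has nilpotent image in a domain, so the image is zero and $a\in P$. Intersecting over all prime ideals $P$ gives $a\in\mathcal{B}(\mathcal{A})$.

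I expect no genuine obstacle here. The only step that needs care is checking that the hypotheses of \Cref{prime_containsM_2} hold, namely that $\mathscr{V}$ is a PI variety. Since $\mathscr{V}$ is nonmatrix, it satisfies some $[x_1,x_2]^m$ by condition~\ref{I.powercommutator}. If one worries that this is circular, one can simply invoke the equivalence \ref{I.M_2}$\Leftrightarrow$\ref{I.jacobson} of the theorem as already proved.
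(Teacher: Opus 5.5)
Your proposal is correct and follows the paper's route: the corollary is stated without separate proof as the case $\mathcal{A}\in\mathrm{var}(\mathcal{A})$ of condition~\ref{I.jacobson} of \Cref{characterization_ringNoetherian}, which the paper derives from~\ref{I.M_2} via~\ref{I.subdirect}. Your explicit two-inclusion argument just unpacks that chain, and it uses \Cref{prime_containsM_2}, including its PI hypothesis, in the same way as the paper's proof of \ref{I.M_2}$\Rightarrow$\ref{I.subdirect}.
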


\begin{Remark}
The converse of the preceding corollary does not hold in general. For example, although the nil radical of the real quaternion algebra $\mathbb{H}$ is trivial, $\mathbb{H}$ does not satisfy a nonmatrix polynomial identity. Indeed,
\[
\mathrm{Id}(\mathbb{H})=\mathrm{Id}\bigl(\mathrm{M}_2(\mathbb{R})\bigr).
\]
\end{Remark}
On the other hand, if $\mathbb{F}$ is algebraically closed and the algebra is finite dimensional, then the converse does hold.

\begin{Prop}
Let $\mathbb{F}$ be an algebraically closed field, and let $\mathcal{A}$ be a finite-dimensional $\mathbb{F}$-algebra such that
\[
\mathcal{B}(\mathcal{A})=\{\,a\in\mathcal{A}\mid \text{$a$ is nilpotent}\,\}.
\]
Then $\mathcal{A}$ generates a nonmatrix variety.
\end{Prop}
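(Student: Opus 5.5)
The plan is to use Wedderburn--Malcev together with the fact that, for a finite-dimensional algebra, $\mathcal{B}(\mathcal{A})=J(\mathcal{A})$, and then apply criterion (v) of \Cref{characterization_ringNoetherian}, i.e.\ exhibit an identity $[x_1,x_2]^m$ of $\mathcal{A}$.

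First, since $\mathcal{A}$ is finite dimensional, it is Artinian. Hence $J(\mathcal{A})$ is nilpotent, so $J(\mathcal{A})\subseteq\mathcal{B}(\mathcal{A})$, and therefore $\mathcal{B}(\mathcal{A})=J(\mathcal{A})$. Since $\mathbb{F}$ is algebraically closed, Wedderburn's theorem gives
\[
\mathcal{A}/J(\mathcal{A})\cong \mathrm{M}_{n_1}(\mathbb{F})\times\cdots\times\mathrm{M}_{n_k}(\mathbb{F}).
\]
If the algebra is non-unital, we add a unit first or simply note that the semisimple quotient still has this form. I claim that every $n_i=1$. Suppose some $n_i\ge 2$, and lift $e_{12}$ in the $i$-th factor to an element $a\in\mathcal{A}$. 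Here the main obstacle appears: an arbitrary lift need not be nilpotent. To get a nilpotent lift, I would use the Wedderburn--Malcev theorem, which applies because $\mathbb{F}$ is algebraically closed and hence perfect. It provides a subalgebra $S\subseteq\mathcal{A}$ with $S\cong\mathcal{A}/J(\mathcal{A})$ and $\mathcal{A}=S\oplus J(\mathcal{A})$. Take $a\in S$ corresponding to $e_{12}$. Then $a^2=0$, so $a$ is nilpotent. By hypothesis $a\in\mathcal{B}(\mathcal{A})=J(\mathcal{A})$, but $a$ has nonzero image in $\mathcal{A}/J(\mathcal{A})$, a contradiction. An alternative that avoids Wedderburn--Malcev is to lift idempotents and matrix units modulo the nilpotent ideal $J(\mathcal{A})$, which is standard for Artinian algebras. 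Either way, $\mathcal{A}/J(\mathcal{A})\cong\mathbb{F}^k$ is commutative.

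Finally, let $J(\mathcal{A})^m=0$. For all $x_1,x_2\in\mathcal{A}$ we have $[x_1,x_2]\in J(\mathcal{A})$, so $[x_1,x_2]^m=0$. Thus $[x_1,x_2]^m\in\mathrm{Id}(\mathcal{A})$, and condition (v) of \Cref{characterization_ringNoetherian} shows that the variety generated by $\mathcal{A}$ is nonmatrix. One can equally argue via condition (ii) directly: $\mathrm{M}_2(\mathbb{F})$ does not satisfy $[x_1,x_2]^m$. Here $\mathbf{k}=\mathbb{F}$, so the only relevant fraction field is $\mathbb{F}$ itself.

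The only delicate point is producing a nilpotent preimage of a matrix unit, handled as above. Algebraic closedness is used precisely to make the simple components full matrix algebras over $\mathbb{F}$. This excludes the quaternion-type counterexample of the preceding remark, where the simple component is a division algebra with no nilpotents.
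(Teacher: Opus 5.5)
Your proof is correct and takes essentially the paper's route: you identify $\mathcal{B}(\mathcal{A})=J(\mathcal{A})$, show via Wedderburn that every block of $\mathcal{A}/J(\mathcal{A})$ is $1\times 1$, obtain $[x_1,x_2]^m$ from $J(\mathcal{A})^m=0$, and apply (v). The only difference is that the paper lifts $e_{12},e_{21}$ to nilpotents with non-nilpotent sum, whereas you use a single nilpotent lift of $e_{12}$ lying outside $J(\mathcal{A})$. Your ``main obstacle'' is not actually an obstacle, so the Wedderburn--Malcev detour is unnecessary: every preimage $a$ of $e_{12}$ is nilpotent, because $a^2\in J(\mathcal{A})$ and $J(\mathcal{A})$ is nilpotent.
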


\begin{proof}
Since $\mathcal{A}$ is finite dimensional, $\mathcal{B}(\mathcal{A}) = J(\mathcal{A})$.
By the Artin--Wedderburn theorem, we have
\[
\mathcal{A}/J(\mathcal{A})
\cong
\mathrm{M}_{n_1}(\mathbb{F})\oplus\cdots\oplus\mathrm{M}_{n_t}(\mathbb{F}).
\]
If $n_j>1$ for some $j$, then one can find nilpotent elements $a,b\in\mathcal{A}$ whose sum is not nilpotent. This contradicts the hypothesis on $\mathcal{A}$. Hence, $\mathcal{A}/J(\mathcal{A})$ is commutative.

Since $J(\mathcal{A})$ is nilpotent, it follows that $\mathcal{A}$ satisfies a power of the commutator. Therefore, by condition~\ref{I.powercommutator} of the previous theorem, $\mathcal{A}$ generates a nonmatrix variety.
\end{proof}

It is worth emphasizing that finite dimensionality is essential in the proposition above. For example, the algebra
\[
\mathcal{A}:=\mathbb{F}e_{11}\oplus\mathbb{F}e_{22}\oplus \mathrm{M}_2(E'),
\]
where $E'$ denotes the nonunital Grassmann algebra, does not generate a nonmatrix variety, even though
\[
J(\mathcal{A})=\mathrm{M}_2(E')
=
\{\,a\in\mathcal{A}\mid \text{$a$ is nilpotent}\,\}.
\]

Specializing \Cref{characterization_ringNoetherian} to the case where $\mathbf{k}$ is a Jacobson Noetherian ring, we obtain the following characterization:

\begin{Thm}\label{characterization_ring}
Let $\mathbf{k}$ be a unital commutative Jacobson Noetherian ring, and let $\mathscr{V}$ be a variety of associative $\mathbf{k}$-algebras. The following conditions are equivalent:
\begin{enumerate}[label=(\roman{enumi})]
\item\label{ii.def} Every simple $\mathbf{k}$-algebra in $\mathscr{V}$ is a field;
\item\label{ii.M_2} $\mathrm{M}_2(\mathbb{F})\notin\mathscr{V}$ for each fraction field $\mathbb{F}$ of a prime homomorphic image of $\mathbf{k}$;
\item\label{ii.subdirectJac} For each $\mathcal{A}\in\mathscr{V}$, $\mathcal{A}/J(\mathcal{A})$ is a subdirect product of fields;
\item\label{ii.subdirect} For each $\mathcal{A}\in\mathscr{V}$, $\mathcal{A}/\mathcal{B}(\mathcal{A})$ is a subdirect product of commutative domains;
\item\label{ii.commutators} For every finitely generated $\mathcal{A}\in\mathscr{V}$, there exists $m\in\mathbb{N}$ such that
\[
[x_1,x_2]\cdots [x_{2m-1},x_{2m}] \in \mathrm{Id}(\mathcal{A});
\]
\item\label{ii.powercommutator} There exists $m\in\mathbb{N}$ such that $[x_1,x_2]^m \in \mathrm{Id}(\mathscr{V})$;
\item\label{ii.jacobson} For every $\mathcal{A}\in\mathscr{V}$,
\[
\mathcal{B}(\mathcal{A}) = \{\, a\in\mathcal{A} \mid \text{$a$ is nilpotent} \,\};
\]
\item\label{ii.jacobsonfg} For every finitely generated $\mathcal{A}\in\mathscr{V}$,
\[
J(\mathcal{A}) = \{\, a\in\mathcal{A} \mid \text{$a$ is nilpotent} \,\};
\]
\item\label{ii.someideal} For any $\mathcal{A}\in\mathscr{V}$ and nilpotent element $a\in\mathcal{A}$, at least one of the following ideals is nil: 
\[
a\mathcal{A},\quad a\mathcal{A}+\mathbf{k}a,\quad \mathcal{A}a,\quad \mathcal{A}a+\mathbf{k}a,\quad \mathcal{A}a\mathcal{A},\quad \mathcal{A}a\mathcal{A}+\mathcal{A}a+a\mathcal{A}+\mathbf{k}a;
\]
\item\label{ii.allideals} For any $\mathcal{A}\in\mathscr{V}$ and nilpotent element $a\in\mathcal{A}$, all of the above ideals are nil;
\item\label{ii.sumnilelements} For every $\mathcal{A}\in\mathscr{V}$ and nilpotent elements $a,b\in\mathcal{A}$, the sum $a+b$ is nilpotent;
\item For every $\mathcal{A}\in\mathscr{V}$, each nilpotent element is strongly nilpotent;
\item For every $\mathcal{A}\in\mathscr{V}$ and every nilpotent elements $a_1$, \dots, $a_m\in\mathcal{A}$, the subalgebra generated by $a_1$, \dots, $a_m$ is nilpotent.
\end{enumerate}
\end{Thm}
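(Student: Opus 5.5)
Most items of \Cref{characterization_ring} are literally the corresponding items of \Cref{characterization_ringNoetherian}, so the plan is to import that theorem and handle the two new items. Items (i)--(iv), (vi), (ix)--(xiii) here coincide with (i)--(iv), (vi), (viii)--(xii) there, and (vi) here is (v) there. These are therefore all equivalent to (ii) without further work; that theorem needs no hypothesis on $\mathbf{k}$. Item (v) here is the statement of \Cref{remarknovo}. That remark shows (v)$\Rightarrow$(vi) unconditionally. It also shows (iv)$\Rightarrow$(v) when $\mathbf{k}$ is Noetherian, using \Cref{nil_nilpotent}. So (v) joins the cycle.

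The only genuinely new condition is (viii): the Jacobson radical of a finitely generated algebra equals its set of nilpotent elements. I will prove (vii)$\Rightarrow$(viii) and (viii)$\Rightarrow$(xi).

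For (vii)$\Rightarrow$(viii), let $\mathcal{A}\in\mathscr{V}$ be finitely generated. By \Cref{jacobsonnilpotent}, which is where the Jacobson and Noetherian hypotheses enter, $J(\mathcal{A})$ is nilpotent. Hence $J(\mathcal{A})\subseteq\mathrm{Nil}(\mathcal{A})$, and every element of $J(\mathcal{A})$ is nilpotent. Conversely, by (vii) every nilpotent element lies in $\mathcal{B}(\mathcal{A})\subseteq J(\mathcal{A})$. Since a nilpotent ideal lies in every prime ideal, we in fact get $J(\mathcal{A})=\mathcal{B}(\mathcal{A})$, and this gives (viii).

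For (viii)$\Rightarrow$(xi), take $\mathcal{A}\in\mathscr{V}$ and nilpotent $a,b\in\mathcal{A}$. Pass to the finitely generated subalgebra $\mathcal{A}_0=\mathrm{alg}(a,b)\in\mathscr{V}$. By (viii), $a,b\in J(\mathcal{A}_0)$, which is an ideal, so $a+b\in J(\mathcal{A}_0)$ and is therefore nilpotent. Alternatively, (viii)$\Rightarrow$(ii) follows directly: $\mathrm{M}_2(\mathbb{F})$ itself may not be finitely generated, but $\mathrm{alg}(e_{12},e_{21})$ is. This subalgebra is a finitely generated image of a subalgebra in $\mathscr{V}$ and contains the non-nilpotent idempotent $e_{12}e_{21}$. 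Its Jacobson radical does not contain $e_{12}$, since $1-e_{12}e_{21}\cdot$ is not invertible there, which gives a contradiction. I would use the sum-of-nilpotents route because it is cleaner.

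The main point needing care is (vii)$\Rightarrow$(viii), namely ensuring that $J(\mathcal{A})$ is nil. This fails over non-Jacobson base rings, which is exactly why the hypothesis appears. The step is handled entirely by \Cref{jacobsonnilpotent}, so I expect no real obstacle beyond correctly matching the numbering of items between the two theorems.
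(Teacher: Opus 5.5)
Your proposal is correct and follows essentially the paper's route: reduce to \Cref{characterization_ringNoetherian} and \Cref{remarknovo}, and use \Cref{jacobsonnilpotent} to bring in item (viii). Your direct argument for (viii)$\Rightarrow$(xi), via the ideal $J(\mathrm{alg}(a,b))$, is slightly more careful than the paper's blanket identity $\mathcal{B}(\mathcal{A})=\mathrm{Nil}(\mathcal{A})=J(\mathcal{A})$. That identity tacitly requires $\mathcal{A}$ to be PI, which is not known a priori when starting from (viii).

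Two small fixes are needed. In (vii)$\Rightarrow$(viii), say explicitly that $\mathcal{A}$ is PI before invoking \Cref{jacobsonnilpotent}: it satisfies $[x_1,x_2]^m$, since (vii) is already in the cycle. In your matching of items, ``(vi) here'' in the first list should read ``(vii) here''.
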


\begin{proof}
The only difference between the statements of this theorem and the statements of  \Cref{characterization_ringNoetherian} is in item  \ref{ii.jacobsonfg}, where $\mathcal{B}(\mathcal{A})$ is replaced by $J(\mathcal{A})$, but in this setting, for any finitely generated $\mathbf{k}$-algebra $\mathcal{A}$, \Cref{jacobsonnilpotent} implies
\[
\mathcal{B}(\mathcal{A}) = \mathrm{Nil}(\mathcal{A}) = J(\mathcal{A}),
\]
and the equivalences follow directly from \Cref{characterization_ringNoetherian} and \Cref{remarknovo}.
\end{proof}

For the particular case where $\mathbf{k}$ is a field  (finite or infinite), we have the following characterization.

\begin{Thm}\label{characterization}
Let $\mathbb{F}$ be an arbitrary field and $\mathscr{V}$ a variety of associative $\mathbb{F}$-algebras. The following conditions are equivalent:
\begin{enumerate}[label=(\roman{enumi})]
\item\label{i.def} Every simple algebra in $\mathscr{V}$ is a field;
\item\label{i.M_2} $\mathrm{M}_2(\mathbb{F})\notin\mathscr{V}$;
\item\label{i.subdirectJac} For each $\mathcal{A}\in\mathscr{V}$, $\mathcal{A}/J(\mathcal{A})$ is a subdirect product of fields;
\item\label{i.subdirect} For each $\mathcal{A}\in\mathscr{V}$, $\mathcal{A}/\mathcal{B}(\mathcal{A})$ is a subdirect product of commutative domains;
\item\label{i.commutators} For every finitely generated $\mathcal{A}\in\mathscr{V}$, there exists $m\in\mathbb{N}$ such that
\[
[x_1,x_2]\cdots [x_{2m-1},x_{2m}] \in \mathrm{Id}(\mathcal{A});
\]
\item\label{i.powercommutator} There exists $m\in\mathbb{N}$ such that $[x_1,x_2]^m \in \mathrm{Id}(\mathscr{V})$;
\item\label{i.jacobson} For every $\mathcal{A}\in\mathscr{V}$,
\[
\mathcal{B}(\mathcal{A}) = \{\, a\in\mathcal{A} \mid \text{$a$ is nilpotent} \,\}
\]
is an ideal of $\mathcal{A}$;
\item\label{i.jacobsonfg} For every finitely generated $\mathcal{A}\in\mathscr{V}$,
\[
J(\mathcal{A}) = \{\, a\in\mathcal{A} \mid \text{$a$ is nilpotent} \,\};
\]
\item\label{i.someideal} For any $\mathcal{A}\in\mathscr{V}$ and nilpotent element $a\in\mathcal{A}$, at least one of the following ideals is nil: 
\[
a\mathcal{A},\quad a\mathcal{A}+\mathbb{F}a,\quad \mathcal{A}a,\quad \mathcal{A}a+\mathbb{F}a,\quad \mathcal{A}a\mathcal{A},\quad \mathcal{A}a\mathcal{A}+\mathcal{A}a+a\mathcal{A}+\mathbb{F}a;
\]
\item\label{i.allideals} For any $\mathcal{A}\in\mathscr{V}$ and nilpotent element $a\in\mathcal{A}$, all of the above ideals are nil;
\item\label{i.sumnilelements} For every $\mathcal{A}\in\mathscr{V}$ and nilpotent elements $a,b\in\mathcal{A}$, the sum $a+b$ is nilpotent;
\item For every $\mathcal{A}\in\mathscr{V}$, each nilpotent element is strongly nilpotent;
\item For every $\mathcal{A}\in\mathscr{V}$ and every nilpotent elements $a_1$, \dots, $a_m\in\mathcal{A}$, the subalgebra generated by $a_1$, \dots, $a_m$ is nilpotent.\qed
\end{enumerate}
\end{Thm}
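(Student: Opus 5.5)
The plan is to specialize \Cref{characterization_ring} to $\mathbf{k}=\mathbb{F}$. A field is Noetherian, and it is Jacobson since its only prime image is itself, with trivial radical. So \Cref{characterization_ring} applies verbatim, and only the items whose wording differs need attention.

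First, item \ref{i.M_2}. The only prime homomorphic image of $\mathbb{F}$ is $\mathbb{F}$, and its fraction field is $\mathbb{F}$. Hence condition \ref{ii.M_2} of \Cref{characterization_ring} reduces exactly to $\mathrm{M}_2(\mathbb{F})\notin\mathscr{V}$. Items \ref{i.def}, \ref{i.subdirectJac}, \ref{i.subdirect}, \ref{i.commutators}, \ref{i.powercommutator}, \ref{i.jacobsonfg}, \ref{i.someideal}, \ref{i.allideals} and \ref{i.sumnilelements}, together with the last two items, are literally the corresponding conditions there with $\mathbf{k}=\mathbb{F}$.

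Second, item \ref{i.jacobson}, which is phrased as ``the set of nilpotent elements equals $\mathcal{B}(\mathcal{A})$, which is an ideal''. Since $\mathcal{B}(\mathcal{A})$ is always an ideal, this carries the same content as \ref{ii.jacobson}. Equivalently, the set of nilpotent elements being an ideal gives \ref{i.sumnilelements}, and \ref{ii.jacobson} gives \ref{i.jacobson} directly.

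The only potential subtlety is whether anything in \Cref{characterization_ringNoetherian} secretly needed an infinite base field. It did not: finiteness of the center is handled inside \Cref{primitive_containsM_2}, and for a finite field the lemma gives a field directly. So I expect no real obstacle beyond these identifications.
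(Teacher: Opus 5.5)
Your proposal is correct and follows the paper's route: the paper gives no separate proof of this theorem and presents it as the specialization of \Cref{characterization_ring} to $\mathbf{k}=\mathbb{F}$. As you observe, a field is Jacobson and Noetherian with $\mathbb{F}$ as its only prime image, so condition (ii) reduces to $\mathrm{M}_2(\mathbb{F})\notin\mathscr{V}$; the rewording of (vii) is harmless because $\mathcal{B}(\mathcal{A})$ is always an ideal, and the remaining items agree verbatim.
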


Imposing restrictions on the base field leads to additional equivalent properties.

\begin{Prop}
Let $\mathbb{F}$ be a field and $\mathscr{V}$ a variety of associative $\mathbb{F}$-algebras. If, for each $\mathcal{A}\in\mathscr{V}$, every $\mathbb{F}$-finite-dimensional irreducible representation of $\mathcal{A}$ is $1$-dimensional, then $\mathscr{V}$ is a nonmatrix variety. The converse holds if $\mathbb{F}$ is algebraically closed.
\end{Prop}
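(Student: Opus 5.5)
For the forward direction I would argue by contraposition via condition \ref{i.M_2} of \Cref{characterization}. Suppose $\mathscr{V}$ is not nonmatrix, so $\mathrm{M}_2(\mathbb{F})\in\mathscr{V}$. Then $\mathcal{A}=\mathrm{M}_2(\mathbb{F})$ belongs to $\mathscr{V}$. Its natural action on the column space $\mathbb{F}^2$ is an $\mathbb{F}$-finite-dimensional irreducible representation of dimension $2$, since $\mathrm{M}_2(\mathbb{F})$ acts transitively on nonzero vectors. This contradicts the hypothesis, so $\mathscr{V}$ must be nonmatrix. This step is essentially immediate.

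For the converse, assume $\mathbb{F}$ is algebraically closed and $\mathscr{V}$ is nonmatrix. Let $\mathcal{A}\in\mathscr{V}$, and let $\rho\colon\mathcal{A}\to\mathrm{End}_{\mathbb{F}}(V)$ be an irreducible representation with $0<\dim_{\mathbb{F}}V<\infty$. I would pass to the image $\bar{\mathcal{A}}=\rho(\mathcal{A})$. It is a quotient of $\mathcal{A}$, hence lies in $\mathscr{V}$, and it is a finite-dimensional algebra acting faithfully and irreducibly on $V$. I need to rule out the degenerate case in which $\bar{\mathcal{A}}V=0$. In that case every subspace is invariant, so irreducibility already forces $\dim V=1$. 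Otherwise $\bar{\mathcal{A}}$ is primitive and, being finite dimensional, simple Artinian. Since $\mathbb{F}$ is algebraically closed, the Jacobson density theorem together with Schur's lemma (where $\mathrm{End}_{\bar{\mathcal{A}}}(V)$ is a finite-dimensional division algebra over $\mathbb{F}$, hence equal to $\mathbb{F}$) gives $\bar{\mathcal{A}}=\mathrm{End}_{\mathbb{F}}(V)\cong\mathrm{M}_n(\mathbb{F})$ with $n=\dim V$.

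Now $\bar{\mathcal{A}}$ is a simple algebra in $\mathscr{V}$. By condition \ref{i.def} of \Cref{characterization}, it is a field. Since $\mathrm{M}_n(\mathbb{F})$ is commutative only for $n=1$, we get $\dim V=1$.

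The only point needing care is the use of algebraic closedness. Without it, $\mathrm{End}_{\bar{\mathcal{A}}}(V)$ could be a proper finite extension field or division algebra. For instance, $\mathbb{C}$ acting on $\mathbb{R}^2$ gives a $2$-dimensional irreducible real representation of a commutative algebra. This is exactly why the converse is stated only over algebraically closed fields. With Schur's lemma available, the main step is the identification $\bar{\mathcal{A}}\cong\mathrm{M}_n(\mathbb{F})$, and it is routine.
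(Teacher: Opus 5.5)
Your proposal is correct. In both directions you pass through the same primitive quotient as the paper, but you invoke different conditions of Theorem~\ref{characterization}.

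\textbf{Forward direction.} Here your route is more direct. The paper first shows that $\mathcal{A}/J(\mathcal{A})$ is a subdirect product of copies of $\mathbb{F}$ and primitive algebras. It then invokes Lemma~\ref{primitive_containsM_2} (Kaplansky's theorem) to conclude that a noncommutative primitive quotient would force $\mathrm{M}_2(\mathbb{F})\in\mathscr{V}$. Only after that does it observe that $\mathrm{M}_2(\mathbb{F})$ has a $2$-dimensional irreducible representation. You test $\mathrm{M}_2(\mathbb{F})$ against the hypothesis directly. That is all that is needed, since over a field the only prime homomorphic image of $\mathbb{F}$ is $\mathbb{F}$, so being nonmatrix reduces to $\mathrm{M}_2(\mathbb{F})\notin\mathscr{V}$. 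Your argument avoids Kaplansky's theorem entirely. It also sidesteps the PI hypothesis that Lemma~\ref{primitive_containsM_2} requires, which the paper leaves implicit.

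\textbf{Converse.} The two arguments use the same ingredients in the opposite order. The paper uses the nonmatrix condition first: by condition (iii) of Theorem~\ref{characterization}, the primitive quotient $\mathcal{A}/\mathrm{Ann}_{\mathcal{A}}(V)$ is a field $\mathbb{E}\supseteq\mathbb{F}$ with $V\cong\mathbb{E}$. Algebraic closedness then forces $\mathbb{E}=\mathbb{F}$. You use algebraic closedness first: Schur's lemma plus density (Burnside's theorem) give $\rho(\mathcal{A})\cong\mathrm{M}_n(\mathbb{F})$, and condition (i) then forces $n=1$.

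The paper's order makes the obstruction over non-closed fields visible as a finite extension $\mathbb{E}/\mathbb{F}$, exactly as in your example of $\mathbb{C}$ acting on $\mathbb{R}^2$. Yours is the more standard representation-theoretic argument. Your separate treatment of the degenerate case $\bar{\mathcal{A}}V=0$ for non-unital algebras is a detail the paper does not address.
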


\begin{proof}
Assume every finite-dimensional irreducible representation of $\mathcal{A}\in\mathscr{V}$ is $1$-dimensional. Then $\mathcal{A}/J(\mathcal{A})$ is a subdirect product of copies of $\mathbb{F}$ and possibly infinite-dimensional $\mathbb{F}$-primitive algebras. If any of the primitive algebras is noncommutative, then \Cref{primitive_containsM_2} implies that $\mathscr{V}$ contains a copy of $\mathrm{M}_2(\mathbb{F})$, giving a $2$-dimensional representation, a contradiction. Hence, $\mathcal{A}/J(\mathcal{A})$ is a subdirect product of fields, and $\mathscr{V}$ is a nonmatrix variety.

Conversely, if $\mathbb{F}$ is algebraically closed, then for any $\mathcal{A}\in\mathscr{V}$, the quotient $\mathcal{A}/J(\mathcal{A})$ is a subdirect product of fields. Therefore, any irreducible representation of $\mathcal{A}$ corresponds to a field extension $\mathbb{E}$ of $\mathbb{F}$. Since $\mathbb{F}$ is algebraically closed, either $\mathbb{E} = \mathbb{F}$ or $\dim_\mathbb{F} \mathbb{E} = \infty$.
\end{proof}

\begin{Example}
Let $\mathbb{F}$ be a field admitting an $n$-dimensional extension $\mathbb{E}/\mathbb{F}$, with $n>1$. Consider the nonmatrix variety $\mathscr{V}$ of $\mathbb{F}$-algebras defined by $[x,y] = 0$. Then $\mathbb{E} \in \mathscr{V}$, but it has an irreducible representation of $\mathbb{F}$-dimension $n$. This shows that the assumption of $\mathbb{F}$ being algebraically closed is essential.
\end{Example}

% \begin{Prop}
% Let $\mathbf{k}$ be a unital commutative ring and $\mathscr{V}$ a variety of associative $\mathbf{k}$-algebras. If $\mathscr{V}$ is a nonmatrix variety, then for each $\mathcal{A}\in\mathscr{V}$ and nilpotent elements $a_1,\dots,a_m \in \mathcal{A}$, the subalgebra generated by $a_1,\dots,a_m$ is nilpotent. The converse holds if there is a infinite prime homomorphic image of $\mathbf{k}$.
% \end{Prop}
% \begin{proof}
% \end{proof}

\begin{Prop}\label{prop}
Let $\mathbf{k}$ be a unital commutative ring and $\mathscr{V}$ a variety of associative $\mathbf{k}$-algebras. If $\mathscr{V}$ is a nonmatrix variety, then for each $\mathcal{A}\in\mathscr{V}$ and integral elements $a_1,\dots,a_m \in \mathcal{A}$, the subalgebra generated by $a_1,\dots,a_m$ is finitely generated as a $\mathbf{k}$-module. The converse holds if there is an infinite prime homomorphic image of $\mathbf{k}$.
\end{Prop}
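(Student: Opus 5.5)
We prove the two directions separately, using \Cref{characterization_ringNoetherian} throughout.

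\emph{Forward direction.} Assume $\mathscr{V}$ is nonmatrix. Fix $\mathcal{A}\in\mathscr{V}$ and integral elements $a_1,\dots,a_m$, and let $\mathcal{A}_0$ be the subalgebra they generate; $\mathcal{A}_0\in\mathscr{V}$. Each $a_i$ satisfies a monic polynomial $p_i\in\mathbf{k}[t]$ of degree $d_i$. By condition \ref{I.powercommutator}, $\mathscr{V}$ satisfies $[x_1,x_2]^N$ for some $N$. We aim to show that $\mathcal{A}_0$ is spanned as a $\mathbf{k}$-module by the finitely many words $a_{i_1}^{e_1}\cdots a_{i_r}^{e_r}$ with $e_j<d_{i_j}$ and with length $r$ bounded in terms of $N$, $m$ and the $d_i$.

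Let $C$ be the ideal of $\mathcal{A}_0$ generated by all commutators $[u,v]$. The quotient $\mathcal{A}_0/C$ is commutative and generated by integral elements, hence a finitely generated $\mathbf{k}$-module. The crux is to show that $C$ is nilpotent. By condition \ref{I.subdirect}, $C\subseteq\mathcal{B}(\mathcal{A}_0)$, and $\mathcal{B}(\mathcal{A}_0)$ is nil. For nilpotence, when $\mathbf{k}$ is Noetherian we can invoke \Cref{nil_nilpotent}. For an arbitrary $\mathbf{k}$, we would instead argue through the identity $[x_1,x_2]^N$: by linearizing it in $\mathcal{A}_0$, every product of sufficiently many commutators becomes expressible in terms of smaller words. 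Concretely, the plan is a Shirshov-type height argument for the nonmatrix variety, which shows that $\mathcal{A}_0$ has bounded height over the words $a_{i}$. This is the classical fact that algebras satisfying a nonmatrix identity and generated by integral elements are finite modules. Alternatively, we reduce to the case $\mathbf{k}=\mathbb{Z}[\text{coefficients of the }p_i]$, which is Noetherian: replace $\mathbf{k}$ by this subring and $\mathcal{A}_0$ by the subring generated by the $a_i$ over it. Finite generation there then passes to $\mathbf{k}$ by extension of scalars.

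Once $C^s=0$, filter $\mathcal{A}_0\supseteq C\supseteq C^2\supseteq\cdots\supseteq C^s=0$. Each factor $C^j/C^{j+1}$ is a module over the commutative, module-finite algebra $\mathcal{A}_0/C$. Since $C$ is generated by finitely many commutators of generators together with the generators themselves, each factor is finitely generated over $\mathcal{A}_0/C$, hence over $\mathbf{k}$ (over a Noetherian $\mathbf{k}$ this is immediate). Therefore $\mathcal{A}_0$ is a finitely generated $\mathbf{k}$-module. This is why the reduction to a Noetherian base ring is the key step.

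\emph{Converse.} Suppose $\bar{\mathbf{k}}=\mathbf{k}/I$ is an infinite domain with fraction field $\mathbb{F}$, and that $\mathscr{V}$ is not nonmatrix. By \Cref{characterization_ringNoetherian}\ref{I.M_2} we may take $\mathrm{M}_2(\mathbb{F})\in\mathscr{V}$; we need $\mathbb{F}$ to be the fraction field of an infinite prime image, and we expect this to be manageable, since if $\mathrm{M}_2$ over a finite field lies in $\mathscr{V}$ one can likely also produce $\mathrm{M}_2$ over an infinite field via generic constructions. Pick $\lambda\in\bar{\mathbf{k}}$ of infinite multiplicative order; such an element exists because $\bar{\mathbf{k}}$ is infinite, e.g.\ a non-root of unity, or use the nilpotent approach below. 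The simplest witnesses are $a=e_{12}$ and $b=e_{21}$, which are integral (they satisfy $t^2=0$). The subalgebra they generate over $\mathbf{k}$ is $\bar{\mathbf{k}}$-spanned by $e_{11},e_{12},e_{21},e_{22}$, which is finitely generated, so this choice fails. Instead we take $a=e_{12}$ and $b=\lambda^{-1}e_{21}$ inside $\mathrm{M}_2(\mathbb{F})$, with $\lambda\in\bar{\mathbf{k}}$ a nonunit. Then $ab=\lambda^{-1}e_{11}$, and the powers $(ab)^k=\lambda^{-k}e_{11}$ generate the $\bar{\mathbf{k}}$-module $\bar{\mathbf{k}}[\lambda^{-1}]e_{11}$. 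This module is not finitely generated when $\lambda$ is a nonunit of an infinite domain; if every element of $\bar{\mathbf{k}}$ is a unit, then $\bar{\mathbf{k}}$ is an infinite field and we use $\mathbb{F}(t)$-type elements instead. Hence the conclusion fails in $\mathrm{M}_2(\mathbb{F})\in\mathscr{V}$. Choosing these elements correctly, using the infinitude of $\bar{\mathbf{k}}$, is the main obstacle in the converse.
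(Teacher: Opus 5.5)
\textbf{Forward direction.} This part is correct, but it takes a heavier route than the paper. You insist on nilpotence of the commutator ideal $C$, so you need \Cref{nil_nilpotent}, and hence the reduction to the Noetherian subring $\mathbf{k}_0\subseteq\mathbf{k}$ generated by the coefficients of the $p_i$. That reduction does work:
\begin{itemize}
\item the $\mathbf{k}_0$-subalgebra generated by the $a_i$ still satisfies $[x_1,x_2]^N$, so it lies in a nonmatrix variety over $\mathbf{k}_0$;
\item the factors $C^j/C^{j+1}$ are finitely generated bimodules over the module-finite commutative algebra $\mathcal{A}_0/C$;
\item one then extends scalars back to $\mathbf{k}$.
\end{itemize}
The paper needs only that $\mathcal{B}(\mathcal{A}_0)$ is nil. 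The quotient $\mathcal{A}_0/\mathcal{B}(\mathcal{A}_0)$ is commutative and generated by integral elements, hence integral. If $p(x)\in\mathcal{B}(\mathcal{A}_0)$ with $p$ monic, then $p(x)^s=0$ and $p^s$ is monic, so $\mathcal{A}_0$ is integral. Kaplansky's theorem (a finitely generated integral PI algebra is a finitely generated module) then finishes over any $\mathbf{k}$, with no Noetherian reduction. What your route buys is the extra fact that $C$ is nilpotent.

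\textbf{Converse: the field case.} Suppose the infinite prime image $\bar{\mathbf{k}}$ is itself a field, which includes the main case of $\mathbf{k}$ an infinite field. Then your nonunit trick is unavailable, and ``use $\mathbb{F}(t)$-type elements'' is not an argument, because you must first know that such an algebra lies in $\mathscr{V}$. That is the missing idea, and it is exactly where the infiniteness of $\mathbb{F}$ is used. Since $\mathbb{F}$ is infinite, $\mathrm{M}_2(\mathbb{F}[t])\cong\mathrm{M}_2(\mathbb{F})\otimes_{\mathbb{F}}\mathbb{F}[t]$ satisfies every ($\mathbf{k}$-)identity of $\mathrm{M}_2(\mathbb{F})$, so it belongs to $\mathscr{V}$. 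There $te_{12}$ and $te_{21}$ are nilpotent, but the subalgebra they generate contains all $t^{2k}e_{11}$. Its $(1,1)$-entries have unbounded $t$-degree, so it is not a finitely generated $\mathbf{k}$-module. Your choice $b=\lambda^{-1}e_{21}$, with $\lambda$ a nonzero nonunit, is valid when $\bar{\mathbf{k}}$ is not a field. On the other hand, your claim that an infinite domain has an element of infinite multiplicative order is false; $\overline{\mathbb{F}_p}$ is a counterexample.

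\textbf{Converse: the reduction to infinite $\mathbb{F}$.} More seriously, the step ``we may take $\mathbb{F}$ to come from an infinite prime image'' cannot be carried out by any generic construction. Take $\mathbf{k}=\mathbb{Z}$ and $\mathscr{V}=\mathrm{var}(\mathrm{M}_2(\mathbb{F}_p))$.
\begin{itemize}
\item $\mathscr{V}$ is generated by a finite algebra, hence locally finite. So every finitely generated member is finite, in particular a finitely generated $\mathbb{Z}$-module, and the finiteness hypothesis holds.
\item $\mathbb{Z}$ has the infinite prime image $\mathbb{Z}$.
\item Yet $\mathrm{M}_2(\mathbb{F}_p)\in\mathscr{V}$, so $\mathscr{V}$ is not nonmatrix. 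Moreover $\mathscr{V}$ contains no $\mathrm{M}_2(K)$ with $K$ infinite.
\end{itemize}
So the converse as stated is false. The paper's own argument has the same limitation: it only shows $\mathrm{M}_2(\mathbb{F})\notin\mathscr{V}$ when $\mathbb{F}$ is the fraction field of an infinite prime image. It is complete only under a stronger hypothesis, for instance that every prime homomorphic image of $\mathbf{k}$ is infinite (e.g.\ $\mathbf{k}$ an infinite field or a $\mathbb{Q}$-algebra). You should prove the converse under such a hypothesis rather than appeal to generic constructions.
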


\begin{proof}
Assume $\mathscr{V}$ is a nonmatrix variety. Let $\mathcal{A}\in\mathscr{V}$ and $a_1,\dots,a_m \in \mathcal{A}$ be integral. Set $\mathcal{A}_0 = \mathrm{alg}(a_1,\dots,a_m)$. By \Cref{characterization_ringNoetherian}.\ref{I.subdirect}, $\mathcal{A}_0 / \mathcal{B}(\mathcal{A}_0)$ is commutative. Since it is finitely generated by integral elements, it is $\mathbf{k}$-integral. Since $\mathcal{B}(\mathcal{A}_0)$ is nil, $\mathcal{A}_0$ is $\mathbf{k}$-integral. By Kaplansky Theorem (see, for instance, \cite[Theorem 8.2.1]{AGPR}), an integral PI $\mathbf{k}$-algebra is locally finitely generated, so $\mathcal{A}_0$ is finitely generated as a $\mathbf{k}$-module.

Conversely, let $\mathbb{F}$ be the fraction field of an infinite prime homomorphic image of $\mathbf{k}$. If $\mathrm{M}_2(\mathbb{F}) \in \mathscr{V}$, then, as $\mathbb{F}$ is infinite, $\mathrm{M}_2(\mathbb{F}[t]) \cong\mathrm{M}_2(\mathbb{F})\otimes \mathbb{F}[t] \in \mathscr{V}$ as well. The elements $\xi_1 = t e_{12}$ and $\xi_2 = t e_{21}$ are integral (in fact, nilpotent), yet they generate an infinite-dimensional $\mathbb{F}$-algebra, hence not finitely generated as a $\mathbf{k}$-module.
\end{proof}

For the particular case where $\mathbf{k}$ is a field, we recover a classical result (see, for instance, \cite[Proposition 2]{C}):

\begin{Prop}
Let $\mathbb{F}$ be an arbitrary field and $\mathscr{V}$ a variety of associative $\mathbb{F}$-algebras. If $\mathscr{V}$ is a nonmatrix variety, then for each $\mathcal{A}\in\mathscr{V}$ and algebraic elements $a_1, \dots, a_m \in \mathcal{A}$, the subalgebra generated by $a_1, \dots, a_m$ is finite-dimensional. The converse holds if $\mathbb{F}$ is infinite. \qed
\end{Prop}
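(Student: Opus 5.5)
This is the special case of \Cref{prop} with $\mathbf{k}=\mathbb{F}$ a field, so the plan is to specialize that argument and check the translations. The only prime ideal of $\mathbb{F}$ is $0$. Hence the only prime homomorphic image of $\mathbb{F}$ is $\mathbb{F}$ itself, and its fraction field is again $\mathbb{F}$. Condition \ref{I.M_2} of \Cref{characterization_ringNoetherian} therefore reads simply $\mathrm{M}_2(\mathbb{F})\notin\mathscr{V}$, in agreement with \Cref{characterization}. Over a field, ``integral'' means ``algebraic'', and ``finitely generated as an $\mathbb{F}$-module'' means ``finite-dimensional''. Finally, the hypothesis of the converse, that $\mathbf{k}$ has an infinite prime homomorphic image, becomes exactly ``$\mathbb{F}$ is infinite''.

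For the direct implication, let $\mathcal{A}\in\mathscr{V}$ and let $a_1,\dots,a_m\in\mathcal{A}$ be algebraic. Set $\mathcal{A}_0=\mathrm{alg}(a_1,\dots,a_m)$. By condition \ref{i.subdirect} of \Cref{characterization}, $\mathcal{A}_0/\mathcal{B}(\mathcal{A}_0)$ is commutative. Being generated by finitely many algebraic elements, it is a finite-dimensional commutative algebra. Since $\mathcal{B}(\mathcal{A}_0)$ is nil, every element of $\mathcal{A}_0$ is algebraic: if $p(\bar x)=0$, then $p(x)$ is nilpotent, so $p(x)^k=0$. Moreover $\mathcal{A}_0$ is PI, since a nonmatrix variety satisfies $[x_1,x_2]^m$ by \Cref{characterization}. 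We then apply Kaplansky's theorem \cite[Theorem 8.2.1]{AGPR}: an algebraic PI-algebra is locally finite. Since $\mathcal{A}_0$ is finitely generated, it is finite-dimensional. The only point needing care is this step, namely that algebraicity lifts through the nil ideal $\mathcal{B}(\mathcal{A}_0)$ and that the PI hypothesis is available; both are immediate as indicated.

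For the converse, assume $\mathbb{F}$ is infinite and $\mathrm{M}_2(\mathbb{F})\in\mathscr{V}$; we aim for a contradiction. Since $\mathbb{F}$ is infinite, $\mathrm{M}_2(\mathbb{F})$ and $\mathrm{M}_2(\mathbb{F}[t])\cong\mathrm{M}_2(\mathbb{F})\otimes\mathbb{F}[t]$ satisfy the same identities, so $\mathrm{M}_2(\mathbb{F}[t])\in\mathscr{V}$. In it, $te_{12}$ and $te_{21}$ are nilpotent, hence algebraic. However, the subalgebra they generate contains $(te_{12}\cdot te_{21})^k=t^{2k}e_{11}$ for every $k\geq1$, so it is infinite-dimensional, contradicting the hypothesis. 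Hence $\mathrm{M}_2(\mathbb{F})\notin\mathscr{V}$, and $\mathscr{V}$ is nonmatrix by \Cref{characterization}.
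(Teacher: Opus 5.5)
Your proposal is correct and follows the paper's own route: the paper records this statement as the field case of \Cref{prop}, and you reproduce that proof. For the direct implication you use commutativity of $\mathcal{A}_0/\mathcal{B}(\mathcal{A}_0)$, lift algebraicity through the nil ideal $\mathcal{B}(\mathcal{A}_0)$, and apply Kaplansky's theorem; for the converse you use the nilpotent elements $te_{12},te_{21}\in\mathrm{M}_2(\mathbb{F}[t])$, and you spell out the PI hypothesis and the independent elements $t^{2k}e_{11}$ that the paper leaves implicit.
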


\begin{Example}
Let $\mathbb{F}$ be a finite field and consider $\mathscr{V} = \mathrm{var}(\mathrm{M}_2(\mathbb{F}))$. This variety is locally finite since it is generated by a finite algebra. Hence, every set of algebraic elements generates a finite-dimensional algebra. However, $\mathrm{var}(\mathrm{M}_2(\mathbb{F}))$ is not a nonmatrix variety. This shows that the assumption of the base field being infinite is essential in the previous proposition.
\end{Example}

\begin{Remark}\label{remark16}
For a given variety $\mathscr{V}$ of associative $\mathbf{k}$-algebras, the following statements are equivalent:
\begin{enumerate}
\renewcommand{\labelenumi}{(\alph{enumi})}
\item There exists $m \in \mathbb{N}$ such that $\mathscr{V}$ satisfies the polynomial identity
\[
[x_1, x_2] \cdots [x_{2m-1}, x_{2m}];
\]
\item For each $\mathcal{A} \in \mathscr{V}$, the ideal generated by $[\mathcal{A}, \mathcal{A}]$ is nilpotent.
\end{enumerate}

Moreover, either of the above conditions implies that the Grassmann algebra $E$ does not belong to $\mathscr{V}$. It is known that $\mathscr{V}$ is generated by a finitely generated algebra if and only if $G\notin\mathscr{V}$ (see, for instance, \cite[Corollary 20.1.4]{AGPR}). Hence, if $\mathscr{V}$ is generated by a finitely generated algebra, then both conditions hold.  

In addition, over a field of characteristic $0$, it is known that $E \notin \mathscr{V}$ implies that $\mathscr{V}$ is generated by a finitely generated algebra. Therefore, we recover the known equivalence of conditions characterizing nonmatrix varieties in terms of the exclusion of the Grassmann algebra (\cite[Theorem 6]{MPR}).
\end{Remark}

\section{Nonmatricial radical\label{sec4}}
In this section, unless otherwise stated, we shall assume that $\mathbf{k}$ is a unital commutative ring.

Item~\ref{ii.powercommutator} of \Cref{characterization_ring} studies the varieties of algebras satisfying a power of the standard polynomial of degree $2$. It is known that every PI-ring satisfies a power of some standard polynomial. We shall construct a way to measure if an element of the algebra lies in some matrix algebra.

Let $\mathcal{M}$ be an irreducible $\mathcal{A}$-module. We define
\[
d(\mathcal{M}) = \sqrt{\dim_{Z(\mathcal{D})} \mathcal{D}} \;\cdot\; \dim_\mathcal{D} \mathcal{M},
\]
where $\mathcal{D} := \mathrm{End}_\mathcal{A}(\mathcal{M})$).

\begin{Def}\label{n-nilpotency}
Let $\mathcal{A}$ be a $\mathbf{k}$-algebra and let $n \ge 1$. An element $a \in \mathcal{A}$ is called \emph{$n$-nonmatricial} if it annihilates every irreducible $Q(\mathcal{A}_0)$-module $\mathcal{M}$ such that $d(\mathcal{M}) \le n$, where $\mathcal{A}_0$ is a prime homomorphic image of $\mathcal{A}$, $Q(\mathcal{A}_0) = C(\mathcal{A}_0) \mathcal{A}_0$ is the central closure of $\mathcal{A}_0$, and $C(\mathcal{A}_0)$ is its extended centroid.
\end{Def}

The definition of $n$-nonmatricial element measures if the element cannot live in some matrix algebra of order $\le n$. The following examples and propositions will shed light in this concept.

It is known that elements of the Jacobson radical can be characterized in terms of annihilators of irreducible modules. Similarly, $n$-nonmatricial elements can be characterized as the annihilators of modules as in \Cref{n-nilpotency}. In particular, we have the following:

\begin{Lemma}\label{module_lemma}
Let $\mathcal A$ be a $\mathbf k$-algebra. For any prime homomorphic image $\mathcal A_0$ of $\mathcal A$ and any irreducible $Q(\mathcal A_0)$-module $\mathcal M$ with $d(\mathcal{M}) < \infty$, the annihilator $\mathrm{Ann}_{\mathcal A}(\mathcal M)$ is a prime ideal of $\mathcal{A}$. Moreover, if $\mathcal{A}$ is PI, then every prime ideal arises in this way.
\end{Lemma}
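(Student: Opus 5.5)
The proof splits into two parts: first, that every annihilator of the stated form is prime; second, that for PI algebras every prime ideal has this form.

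\emph{Annihilators are prime.} Let $\pi\colon\mathcal A\to\mathcal A_0$ be the quotient map onto a prime image. Let $\mathcal M$ be an irreducible $Q(\mathcal A_0)$-module with $d(\mathcal M)<\infty$, viewed as an $\mathcal A$-module through $\mathcal A\to\mathcal A_0\subseteq Q(\mathcal A_0)$. Since $d(\mathcal M)<\infty$, the primitive quotient $\bar Q:=Q(\mathcal A_0)/\mathrm{Ann}_{Q(\mathcal A_0)}(\mathcal M)$ acts densely, by Jacobson density, on a finite-dimensional vector space over the division ring $\mathcal D$. Hence $\bar Q\cong \mathrm{End}_{\mathcal D}(\mathcal M)\cong\mathrm M_k(\mathcal D^{op})$ is simple artinian.

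Moreover, $\mathrm{Ann}_{\mathcal A}(\mathcal M)$ is the kernel of the composite $\mathcal A\to \bar Q$. Write $B$ for the image of $\mathcal A$ in $\bar Q$. We need $B$ to be prime.

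\emph{Key step.} The quotient $\bar Q$ is generated by $B$ over the image of the extended centroid $C(\mathcal A_0)$, because $Q(\mathcal A_0)=C(\mathcal A_0)\mathcal A_0$. That image is central in $\bar Q$, and since $\bar Q$ is simple, the image lies inside the field $Z(\bar Q)$. Now take $x,y\in B$ with $xBy=0$. Then
\[
x\,\bar Q\,y=Z(\bar Q)\,xBy=0.
\]
Primeness of the simple ring $\bar Q$ then forces $x=0$ or $y=0$. Hence $B$ is prime, and $\mathrm{Ann}_{\mathcal A}(\mathcal M)$ is a prime ideal.

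\emph{Every prime arises, for PI $\mathcal A$.} Let $P$ be a prime ideal of $\mathcal A$ and set $\mathcal A_0=\mathcal A/P$. Then $\mathcal A_0$ is a prime PI algebra. By Posner's theorem, as recalled before \Cref{prime_containsM_2}, its central localization is simple and finite-dimensional over its center. The same then holds for the central closure $Q(\mathcal A_0)$. For prime PI rings this is the standard Rowen/Martindale fact that $C(\mathcal A_0)$ is the field of fractions of $Z(\mathcal A_0)$ and $Q(\mathcal A_0)$ equals the Posner localization.

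So $Q(\mathcal A_0)\cong\mathrm M_k(D)$ with $D$ finite-dimensional over its center. Take $\mathcal M$ to be its unique simple module $D^k$. Then $d(\mathcal M)<\infty$, and $\mathcal M$ is faithful over $Q(\mathcal A_0)$. Since $\mathcal A_0$ embeds in $Q(\mathcal A_0)$, we get
\[
\mathrm{Ann}_{\mathcal A}(\mathcal M)=\ker(\mathcal A\to\mathcal A_0)=P.
\]

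\emph{Main obstacle.} The delicate point is the identification of $Q(\mathcal A_0)$ with the Posner localization over the ring $\mathbf k$, which need not be a field. I would argue it directly: $Z(\mathcal A_0)\setminus\{0\}$ consists of regular central elements, so they become invertible in $C(\mathcal A_0)$. Conversely, nonzero ideals of a prime PI ring meet the center nontrivially (Rowen's theorem). Every element of $C(\mathcal A_0)$ is defined on some nonzero ideal, so it can be written as a quotient of central elements. This step uses only ring-theoretic structure, so it is insensitive to $\mathbf k$.
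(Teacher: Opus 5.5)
Your proof is correct and follows essentially the same route as the paper: in the first part you use $Q(\mathcal A_0)=C(\mathcal A_0)\mathcal A_0$ to lift $x\mathcal A_0 y\subseteq \mathrm{Ann}(\mathcal M)$ to $xQ(\mathcal A_0)y\subseteq \mathrm{Ann}_{Q(\mathcal A_0)}(\mathcal M)$ and invoke primeness of the simple quotient, and in the second you take a faithful irreducible module over the simple algebra $Q(\mathcal A/P)$ given by Posner's theorem. The only addition is that you explicitly justify identifying $Q(\mathcal A_0)$ with the Posner central localization (via Rowen's theorem), a step the paper uses without comment.
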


\begin{proof}
Assume that $\mathcal A_0$ is a prime homomorphic image of $\mathcal A$, and that $\mathcal M$ is an irreducible $Q(\mathcal A_0)$-module with $d(\mathcal{M}) < \infty$. Then the quotient $Q(\mathcal{A}_0)/\mathrm{Ann}_{Q(\mathcal{A}_0)}(\mathcal{M})$ is simple and finite-dimensional over its center. We show that $\mathrm{Ann}_{\mathcal{A}_0}(\mathcal{M})$ is a prime ideal of $\mathcal{A}_0$.

Let $a, b \in \mathcal{A}_0$ be such that $a\mathcal{A}_0 b \subseteq \mathrm{Ann}_{\mathcal{A}_0}(\mathcal{M})$. Then
\begin{align*}
a Q(\mathcal{A}_0) b &= a C(\mathcal{A}_0) \mathcal{A}_0 b = C(\mathcal{A}_0) a \mathcal{A}_0 b \subseteq C(\mathcal{A}_0) \mathrm{Ann}_{\mathcal{A}_0}(\mathcal{M}) \\
&\subseteq \mathrm{Ann}_{Q(\mathcal{A}_0)}(\mathcal{M}).
\end{align*}
Since $Q(\mathcal{A}_0)/\mathrm{Ann}_{Q(\mathcal{A}_0)}(\mathcal{M})$ is simple (and hence prime), it follows that either $a$ or $b$ belongs to $\mathrm{Ann}_{Q(\mathcal{A}_0)}(\mathcal{M})$. Therefore, $\mathrm{Ann}_{\mathcal{A}_0}(\mathcal{M})$ is a prime ideal of $\mathcal{A}_0$, and thus
\[
\mathrm{Ann}_\mathcal{A}(\mathcal{M}) = \mathrm{Ker}(\mathcal{A} \to \mathcal{A}_0 / \mathrm{Ann}_{\mathcal{A}_0}(\mathcal{M}))
\]
is a prime ideal of $\mathcal{A}$.

Now, assume that $\mathcal{A}$ is PI and let $I \subseteq \mathcal{A}$ be a prime ideal. By Posner's Theorem, $Q(\mathcal{A}/I)$ is a simple algebra of finite dimension over its center. Let $\mathcal{M}$ be an irreducible $Q(\mathcal{A}/I)$-module. Then $d(\mathcal{M}) < \infty$, and $\mathcal{M}$ is faithful as a $Q(\mathcal{A}/I)$-module. Hence, $I = \mathrm{Ann}_\mathcal{A}(\mathcal{M})$.
\end{proof}

\begin{Def}
Let $\mathcal{A}$ be a $\mathbf{k}$-algebra and $w\in\mathbb{N}$. We define the \emph{$w$-nonmatricial ideal} as
\[
\mathfrak{M}_w(\mathcal{A}) := \{\text{$w$-nonmatricial elements of $\mathcal{A}$}\}.
\]
In addition, we define
\[
\mathfrak{M}_\infty(\mathcal{A}) := \bigcap_{w \in \mathbb{N}} \mathfrak{M}_w(\mathcal{A}).
\]
\end{Def}
It is not hard to see that each $\mathfrak{M}_w(\mathcal{A})$, for $w\in\mathbb{N}\cup\{\infty\}$, is a complete Hoehnke radical (see, for instance, \cite[Section 2.1]{GWbook}). Moreover, we have a decreasing sequence of ideals:
\[
\mathfrak{M}_1(\mathcal{A}) \supseteq \mathfrak{M}_2(\mathcal{A}) \supseteq \cdots
\]

\begin{Example}
Let
\begin{align*}
\mathcal{A}&=\mathrm{UBT}(1,2,3;\mathbb{F})=\left\{\begin{pmatrix}A_{11}&A_{12}&A_{13}\\0&A_{22}&A_{23}\\0&0&A_{33}\end{pmatrix}\,\bigg|\,A_{ij}\in\mathrm{M}_{i\times j}(\mathbb{F})\right\}\\%
&=\mathbb{F}\oplus\mathrm{M}_2(\mathbb{F})\oplus\mathrm{M}_3(\mathbb{F})\oplus J.
\end{align*}
Then, it is not hard to see that
\begin{equation*}
\begin{split}
&\mathfrak{M}_1(\mathcal{A})=\mathrm{M}_2(\mathbb{F})\oplus\mathrm{M}_3(\mathbb{F})\oplus J,\quad\mathfrak{M}_2(\mathcal{A})=\mathrm{M}_3(\mathbb{F})\oplus J,\\%
&\mathfrak{M}_\infty(\mathcal{A})=\mathfrak{M}_k(\mathcal{A})=J,\quad\forall k\ge3.
\end{split}
\end{equation*}
\end{Example}

It is clear that $\mathfrak{M}_\infty(\mathcal{A})$ coincides with the intersection of all annihilators of modules as in the statement of \Cref{module_lemma}. Moreover, the same lemma implies the following:

\begin{Prop}\label{prop19}
Let $\mathbf k$ be a unital commutative ring, and let $\mathcal A$ be a $\mathbf k$-algebra. Then
\[
\mathcal{B}(\mathcal{A}) \subseteq \mathfrak{M}_\infty(\mathcal{A}).
\]
In addition, if $\mathcal{A}$ is PI, then $\mathcal{B}(\mathcal{A}) = \mathfrak{M}_\infty(\mathcal{A})$.\qed
\end{Prop}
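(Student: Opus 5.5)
We plan to use \Cref{module_lemma} to identify $\mathfrak{M}_\infty(\mathcal{A})$ with an intersection of prime ideals, and then compare that intersection with $\mathcal{B}(\mathcal{A})$, the intersection of all prime ideals.

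First we make explicit the observation preceding the statement. An element $a$ lies in $\mathfrak{M}_\infty(\mathcal{A})$ exactly when it is $w$-nonmatricial for every $w\in\mathbb{N}$. Since $d(\mathcal{M})$ is either finite (an integer or at least a real number bounded by some integer) or infinite, this means that $a$ annihilates every irreducible $Q(\mathcal{A}_0)$-module $\mathcal{M}$ with $d(\mathcal{M})<\infty$, where $\mathcal{A}_0$ ranges over the prime homomorphic images of $\mathcal{A}$. Here $a$ acts on $\mathcal{M}$ through its image in $\mathcal{A}_0$. Thus
\[
\mathfrak{M}_\infty(\mathcal{A})=\bigcap \mathrm{Ann}_{\mathcal{A}}(\mathcal{M}),
\]
where the intersection runs over all such pairs $(\mathcal{A}_0,\mathcal{M})$. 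If no such pair exists, the intersection is $\mathcal{A}$ by convention.

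For the inclusion, \Cref{module_lemma} shows that each $\mathrm{Ann}_{\mathcal{A}}(\mathcal{M})$ in this family is a prime ideal of $\mathcal{A}$. Hence $\mathfrak{M}_\infty(\mathcal{A})$ is an intersection of some prime ideals. It therefore contains the intersection of all prime ideals, which is $\mathcal{B}(\mathcal{A})$.

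For the equality when $\mathcal{A}$ is PI, the second part of \Cref{module_lemma} says that every prime ideal $I$ of $\mathcal{A}$ has the form $\mathrm{Ann}_{\mathcal{A}}(\mathcal{M})$ for some irreducible $Q(\mathcal{A}/I)$-module with $d(\mathcal{M})<\infty$. So every prime ideal occurs in the family, and the intersection is exactly $\mathcal{B}(\mathcal{A})$.

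The only point needing care is the first step: checking that requiring $d(\mathcal{M})\le w$ for all $w$ covers precisely the modules with $d(\mathcal{M})<\infty$. This holds because $d(\mathcal{M})$, when finite, is at most some natural number. The rest is bookkeeping with \Cref{module_lemma}.
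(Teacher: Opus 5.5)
Your proposal is correct and is essentially the paper's argument. The paper likewise notes that $\mathfrak{M}_\infty(\mathcal{A})$ is the intersection of the annihilators $\mathrm{Ann}_{\mathcal{A}}(\mathcal{M})$ over all pairs $(\mathcal{A}_0,\mathcal{M})$ with $d(\mathcal{M})<\infty$, and then reads off the inclusion $\mathcal{B}(\mathcal{A})\subseteq\mathfrak{M}_\infty(\mathcal{A})$ and the PI equality from the two halves of \Cref{module_lemma}, exactly as you do.
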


The following example shows that the equality $\mathfrak{M}_\infty(\mathcal{R})=\mathcal{B}(\mathcal{R})$ might hold even for non-PI rings.
\begin{Example}\label{UT}
Let 
\[
\mathrm{UT} = \bigcup_{n \in \mathbb{N}} \mathrm{UT}_n(\mathbb{F})
\] 
be the algebra of infinite upper triangular matrices with only finitely many nonzero entries. It is clear that $\mathrm{UT}$ is not PI; however, 
\[
\mathcal{B}(\mathrm{UT}) = \mathfrak{M}_1(\mathrm{UT})=\mathfrak{M}_\infty(\mathrm{UT}).
\] 
\end{Example}

The next example shows that $\mathfrak{M}_\infty(\mathcal{R})$ does not coincide with other known radicals.
\begin{Example}
Let $\mathcal{R}$ be a unital simple non-Artinian ring. Then $\mathcal{R}$ is centrally closed and admits no irreducible module $\mathcal{M}$ satisfying $d(\mathcal{M}) < \infty$. Consequently, every element $r \in \mathcal{R}$ is $n$-nonmatricial for every $n \in \mathbb{N}$. Hence,
\[
\mathfrak{M}_\infty(\mathcal{R}) = \mathcal{R}.
\]
In particular, $\mathfrak{M}_\infty(\mathcal{R})$ does not coincide with any of the radicals $\mathcal{B}(\mathcal{R})$, $\mathrm{Nil}(\mathcal{R})$, or $J(\mathcal{R})$.
\end{Example}

The next example shows that the sequence $(\mathfrak{M}_n(\mathcal{A}))$ does not necessarily stabilize.

\begin{Example}
Let us consider the algebra of block triangular matrices with infinite increasing sized blocks given by
\begin{align*}
\mathrm{UBT} &= \bigcup_{n \in \mathbb{N}} \mathrm{UBT}(1,2,\ldots,n) \\
&= \left\{\begin{pmatrix}
A_{11} & A_{12} & \cdots & A_{1m} & 0 & \cdots \\
0 & A_{22} & \cdots & A_{2m} & 0 & \cdots \\
\vdots & \ddots & \ddots & \vdots & \vdots & \cdots \\
0 & & \ddots & A_{mm} & 0 & \cdots \\
0 & 0 & \cdots & 0 & 0 & \cdots \\
\vdots & \vdots & \vdots & \vdots & \vdots & \ddots
\end{pmatrix}\,\, \Bigg|\,\,m \in \mathbb{N},\; A_{ij} \in \mathrm{M}_{i \times j}(\mathbb{F})\right\}.
\end{align*}
Then, $\mathrm{UBT}$ admits matrix algebras of any order as a homomorphic image. In addition, the sequence
\[
\mathfrak{M}_1(\mathrm{UBT}) \supsetneq \mathfrak{M}_2(\mathrm{UBT}) \supsetneq \cdots
\]
does not stabilize. Moreover, $\mathcal{B}(\mathrm{UBT}) = \mathfrak{M}_\infty(\mathrm{UBT})$.
\end{Example}

The following example clarifies the connection between the definition of a nonmatricial element and the polynomial identities satisfied by the algebra.
\begin{Example}\label{obs}
Let $\mathcal{A}$ be a PI $\mathbf{k}$-algebra, $\mathcal{A}_0$ a prime homomorphic image of $\mathcal{A}$, and $Q(\mathcal{A}_0)$ its central closure. Suppose there exists an irreducible $Q(\mathcal{A}_0)$-module $\mathcal{M}$ with $d(\mathcal{M}) = n$. Then, for some field of fractions $\mathbb{F}$ of a prime homomorphic image of $\mathbf{k}$, we have
\[
\mathrm{M}_n(\mathbb{F}) \in \mathrm{var}(\mathcal{A}).
\]

Indeed, by Posner's Theorem, $Q(\mathcal{A}_0) = Z^{-1}\mathcal{A}_0$ is simple and finite-dimensional over its center $Z = Z(\mathcal{A}_0)$. If $Z$ is infinite, then $\mathrm{Id}(\mathcal{A}_0) = \mathrm{Id}(Q(\mathcal{A}_0))$. Otherwise, $Z$ is a field and $Q(\mathcal{A}_0) = \mathcal{A}_0$. Therefore,
\[
Q(\mathcal{A}_0) \in \mathrm{var}(\mathcal{A}_0) \subseteq \mathrm{var}(\mathcal{A}).
\]
Moreover, it contains the image of the map
\[
Q(\mathcal{A}_0) \to \mathrm{End}_{\mathcal{D}}(\mathcal{M}),
\]
where $\mathcal{D} = \mathrm{End}_{Q(\mathcal{A}_0)}(\mathcal{M})$. Since $\dim_\mathcal{D} \mathcal{M} < \infty$, Density's Theorem ensures that the map is surjective. Hence, $\mathrm{var}(\mathcal{A})$ contains $\mathrm{End}_\mathcal{D}(\mathcal{M}) \cong \mathrm{M}_k(\mathcal{D})$, where $k = \dim_\mathcal{D} \mathcal{M}$. It follows that
\[
\mathrm{M}_{kr}(Z(\mathcal{D})) \in \mathrm{var}(\mathcal{A}),
\]
where $r = \sqrt{\dim_{Z(\mathcal{D})} \mathcal{D}}$. Finally, since $Z(\mathcal{D})$ is a $\mathbf{k}$-algebra, it contains a prime homomorphic image $\hat{\mathbf{k}}$ of $\mathbf{k}$. Therefore,
\[
\mathrm{M}_{kr}(\mathbb{F}) \in \mathrm{var}(\mathcal{A}),
\]
where $\mathbb{F}$ is the field of fractions of $\hat{\mathbf{k}}$.
\end{Example}

We denote the standard polynomial of degree $k$ by $\mathrm{st}_k$. By the Amitsur-Levitzki Theorem (see, for instance, \cite[Section 10.1.1]{AGPR}), $\mathrm{M}_n(\mathbf{k})$ satisfies $\mathrm{st}_{2n}$. Moreover, we shall describe the polynomial identities of $\mathcal{A}/\mathfrak{M}_n(\mathcal{A})$.

\begin{Prop}\label{prop1}
Let $\mathcal{A}$ be a $\mathbf{k}$-algebra. Then $\mathcal{A}/\mathfrak{M}_n(\mathcal{A})$ satisfies $\mathrm{st}_{2n}$ (and any other multilinear polynomial identity satisfied by $\mathrm{M}_n(\mathbf{k})$). Moreover, there exist a sequence $1\le n_1<n_2<\cdots<n_t\le n$ and commutative semiprime $\mathbf{k}$-algebras $\mathcal{C}_1$, \dots, $\mathcal{C}_t$ such that
$$
\mathrm{Id}(\mathcal{A}/\mathfrak{M}_n(\mathcal{A}))=\bigcap_{i=1}^t\mathrm{Id}(\mathrm{M}_{n_i}(\mathcal{C}_i)).
$$
In particular, if $\mathbf{k}=\mathbb{F}$ is an infinite field, then $\mathrm{Id}(\mathcal{A}/\mathfrak{M}_n(\mathcal{A}))=\mathrm{Id}(\mathrm{M}_{n'}(\mathbb{F}))$, for some $n'\le n$.
\end{Prop}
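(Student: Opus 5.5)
The plan is to realize $\mathcal{A}/\mathfrak{M}_n(\mathcal{A})$ as a subdirect product of algebras embedded in matrix algebras of size at most $n$ over commutative rings.

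\textbf{Step 1: the embedding.} By definition, $\mathfrak{M}_n(\mathcal{A})$ is the intersection of the annihilators $\mathrm{Ann}_{\mathcal{A}}(\mathcal{M})$. Here $\mathcal{A}_0$ runs over prime homomorphic images of $\mathcal{A}$, and $\mathcal{M}$ runs over irreducible $Q(\mathcal{A}_0)$-modules with $d(\mathcal{M})\le n$. Hence $\mathcal{A}/\mathfrak{M}_n(\mathcal{A})$ embeds in $\prod_{\mathcal{M}} \mathcal{A}/\mathrm{Ann}_{\mathcal{A}}(\mathcal{M})$.

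Fix one such $\mathcal{M}$ and set $\mathcal{D}=\mathrm{End}_{Q(\mathcal{A}_0)}(\mathcal{M})$, $k=\dim_{\mathcal{D}}\mathcal{M}$ and $r=\sqrt{\dim_{Z(\mathcal{D})}\mathcal{D}}$, so that $kr=d(\mathcal{M})\le n$. The algebra $\mathcal{A}/\mathrm{Ann}_{\mathcal{A}}(\mathcal{M})$ embeds into $\mathrm{End}_{\mathcal{D}}(\mathcal{M})\cong \mathrm{M}_k(\mathcal{D})$. Take a maximal subfield $\mathbb{K}$ of $\mathcal{D}$. Then $\mathcal{D}\otimes_{Z(\mathcal{D})}\mathbb{K}\cong \mathrm{M}_r(\mathbb{K})$, and so
\[
\mathrm{M}_k(\mathcal{D})\hookrightarrow \mathrm{M}_{kr}(\mathbb{K}).
\]
Every quotient $\mathcal{A}/\mathrm{Ann}_{\mathcal{A}}(\mathcal{M})$ therefore embeds as a $\mathbf{k}$-subalgebra of $\mathrm{M}_{m}(\mathbb{K})$ with $m=d(\mathcal{M})\le n$ and $\mathbb{K}$ a field that is a $\mathbf{k}$-algebra.

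\textbf{Step 2: the first assertion.} Any multilinear identity $f$ of $\mathrm{M}_n(\mathbf{k})$ is also an identity of $\mathrm{M}_n(\mathbb{K})$ for every commutative $\mathbf{k}$-algebra $\mathbb{K}$. Indeed, by multilinearity it suffices to evaluate $f$ on matrix units times scalars, and the coefficients of $f$ act through $\mathbf{k}\to\mathbb{K}$. For $m\le n$, the algebra $\mathrm{M}_m(\mathbb{K})$ sits as a corner in $\mathrm{M}_n(\mathbb{K})$, so $f$ holds there too. Identities pass to subalgebras and to (subdirect) products, which gives the claim for $\mathcal{A}/\mathfrak{M}_n(\mathcal{A})$. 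In particular this applies to $\mathrm{st}_{2n}$, by the Amitsur--Levitzki theorem.

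\textbf{Step 3: the description of the identities.} I would group the modules $\mathcal{M}$ according to $m=d(\mathcal{M})\in\{1,\dots,n\}$. Let $n_1<\dots<n_t$ be the values of $m$ that actually occur. For each $i$, set
\[
\mathcal{C}_i=\prod_{d(\mathcal{M})=n_i}\mathbb{K}_{\mathcal{M}},
\]
which is commutative and semiprime, being a product of fields. Then
\[
\prod_{d(\mathcal{M})=n_i}\mathrm{M}_{n_i}(\mathbb{K}_{\mathcal{M}})=\mathrm{M}_{n_i}(\mathcal{C}_i),
\]
and the embedding of Step 1 gives
\[
\bigcap_{i=1}^t\mathrm{Id}(\mathrm{M}_{n_i}(\mathcal{C}_i))\subseteq \mathrm{Id}(\mathcal{A}/\mathfrak{M}_n(\mathcal{A})).
\]

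The reverse inclusion is the main obstacle. I need $\mathrm{M}_{n_i}(\mathcal{C}_i)\in\mathrm{var}(\mathcal{A}/\mathfrak{M}_n(\mathcal{A}))$. By Example~\ref{obs}, applied to the PI algebra $\mathcal{A}/\mathfrak{M}_n(\mathcal{A})$ (which is PI by Step 2), its variety contains $\mathrm{M}_{k}(\mathcal{D})$ for each $\mathcal{M}$. This uses the density theorem together with the fact that $Q(\mathcal{A}_0)$ lies in the variety; note that $\mathcal{A}_0$ factors through $\mathcal{A}/\mathfrak{M}_n(\mathcal{A})$ because the annihilator contains $\mathfrak{M}_n(\mathcal{A})$. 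When $Z(\mathcal{D})$ is infinite, scalar extension to $\mathbb{K}$ preserves identities, as in the proof of Lemma~\ref{primitive_containsM_2}, so $\mathrm{M}_{n_i}(\mathbb{K}_{\mathcal{M}})$ lies in the variety. When $Z(\mathcal{D})$ is finite, $\mathcal{D}$ is commutative and $\mathbb{K}=\mathcal{D}$ needs no extension. Closing under products then yields $\mathrm{M}_{n_i}(\mathcal{C}_i)$, which gives equality.

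\textbf{Step 4: infinite base field.} If $\mathbf{k}=\mathbb{F}$ is infinite, each $\mathcal{C}_i$ is a commutative $\mathbb{F}$-algebra that is a product of field extensions of $\mathbb{F}$. Over an infinite field, $\mathrm{Id}(\mathrm{M}_{n_i}(\mathcal{C}_i))=\mathrm{Id}(\mathrm{M}_{n_i}(\mathbb{F}))$, since all identities follow from multilinear ones and these are stable under extension of scalars. Moreover, $\mathrm{Id}(\mathrm{M}_{n_t}(\mathbb{F}))\subseteq \mathrm{Id}(\mathrm{M}_{n_i}(\mathbb{F}))$ for all $i$. Hence the intersection equals $\mathrm{Id}(\mathrm{M}_{n'}(\mathbb{F}))$ with $n'=n_t\le n$, or $\mathcal{A}=\mathfrak{M}_n(\mathcal{A})$ in the degenerate case $t=0$.
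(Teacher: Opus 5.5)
\textbf{There is a genuine gap in Step 3.} Your Steps 1 and 2 are fine and prove the first assertion. The inclusion $\bigcap_i \mathrm{Id}(\mathrm{M}_{n_i}(\mathcal{C}_i))\subseteq \mathrm{Id}(\mathcal{A}/\mathfrak{M}_n(\mathcal{A}))$ is also correct. The reverse inclusion, however, is false for your choice of $\mathcal{C}_i$.

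The sentence ``$\mathcal{A}_0$ factors through $\mathcal{A}/\mathfrak{M}_n(\mathcal{A})$ because the annihilator contains $\mathfrak{M}_n(\mathcal{A})$'' does not follow. Both $\ker(\mathcal{A}\to\mathcal{A}_0)$ and $\mathfrak{M}_n(\mathcal{A})$ lie in $\mathrm{Ann}_{\mathcal{A}}(\mathcal{M})$, but nothing relates them to each other. Since $\mathcal{A}$ is not assumed PI, $Q(\mathcal{A}_0)$ need not be simple and $\mathcal{M}$ need not be faithful over $\mathcal{A}_0$.

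The prime quotient of $\mathcal{A}/\mathfrak{M}_n(\mathcal{A})$ actually attached to $\mathcal{M}$ is $\mathcal{P}=\mathcal{A}/\mathrm{Ann}_{\mathcal{A}}(\mathcal{M})$. It sits inside $\mathrm{End}_{\mathcal{D}}(\mathcal{M})$ and only spans it over the image of $C(\mathcal{A}_0)$. Moreover, $\mathcal{M}$ need not be an irreducible $Q(\mathcal{P})$-module, so Example~\ref{obs} does not apply. In fact $Z(\mathcal{P})$ can be a finite field while $Z(\mathcal{D})$ is infinite, so $\mathcal{P}$ can satisfy strictly more identities than $\mathrm{M}_{kr}(\mathbb{K}_{\mathcal{M}})$.

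Here is a concrete counterexample. Let $\mathbf{k}=\mathbb{F}_2$ and $F=\mathbb{F}_2(t)$. Let $I$ be the algebra of finite-rank $F$-linear operators on $F^{(\mathbb{N})}$, and set $\mathcal{A}=\mathbb{F}_2\cdot 1+I$.
\begin{itemize}
\item The prime ideals of $\mathcal{A}$ are exactly $0$ and $I$.
\item $C(\mathcal{A})=F$ and $Q(\mathcal{A})=F\cdot 1+I$. The irreducible $Q(\mathcal{A})$-modules are $F^{(\mathbb{N})}$, with $d=\infty$, and $F=Q(\mathcal{A})/I$, with $d=1$ and annihilator $I$ in $\mathcal{A}$.
\item The prime image $\mathcal{A}/I\cong\mathbb{F}_2$ contributes only the module $\mathbb{F}_2$.
\end{itemize}
Hence $\mathfrak{M}_1(\mathcal{A})=I$, and $\mathcal{A}/\mathfrak{M}_1(\mathcal{A})\cong\mathbb{F}_2$ satisfies $x_1^2+x_1$. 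Your $\mathcal{C}_1=\mathbb{F}_2\times F$ does not satisfy it.

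\textbf{The repair} is what the paper's short proof does: read the field off from $\mathcal{P}_{\mathcal{M}}=\mathcal{A}/\mathrm{Ann}_{\mathcal{A}}(\mathcal{M})$ itself, not from $\mathcal{D}$. Each $\mathcal{P}_{\mathcal{M}}$ is prime by Lemma~\ref{module_lemma} and PI by your Step 2. By Posner's theorem there are two cases.
\begin{itemize}
\item If $Z(\mathcal{P}_{\mathcal{M}})$ is finite, then $\mathcal{P}_{\mathcal{M}}\cong\mathrm{M}_m(\mathbb{F})$ for a finite field $\mathbb{F}$.
\item If $Z(\mathcal{P}_{\mathcal{M}})$ is infinite, then $\mathcal{P}_{\mathcal{M}}$ is PI-equivalent to its central localization, and hence to $\mathrm{M}_m(\mathbb{K})$ for an infinite splitting field $\mathbb{K}$.
\end{itemize}
In both cases $m\le n$, because $\mathrm{st}_{2n}$ holds. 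For a subdirect product, $\mathrm{Id}(\mathcal{A}/\mathfrak{M}_n(\mathcal{A}))=\bigcap_{\mathcal{M}}\mathrm{Id}(\mathcal{P}_{\mathcal{M}})$. So grouping by $m$ and taking products of these fields gives the equality directly, with no variety-membership argument needed. Your Step 4 then goes through, since over an infinite base field the distinction disappears.
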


\begin{proof}
% Let $a_1, \dots, a_{2n} \in \mathcal{A}$ and set 
% \[
% b = \mathrm{st}_{2n}(a_1, \ldots, a_{2n}).
% \] 
% Suppose $b \notin \mathfrak{M}_n(\mathcal{A})$. Then there exists a prime homomorphic image $\mathcal{A}_0$ of $\mathcal{A}$ and an irreducible $Q(\mathcal{A}_0)$-module $\mathcal{M}$ with $d(\mathcal{M}) \le n$ such that $b \mathcal{M} \ne 0$. 

% However, we have 
% \[
% \mathrm{Id}(\mathrm{End}_\mathcal{D}(\mathcal{M})) = \mathrm{Id}(\mathrm{M}_{d(\mathcal{M})}(Z(\mathcal{D}))),
% \] 
% where $\mathcal{D} = \mathrm{End}_{Q(\mathcal{A}_0)}(\mathcal{M})$. Since $d(\mathcal{M}) \le n$, it follows that $\mathrm{End}_\mathcal{D}(\mathcal{M})$ satisfies $\mathrm{st}_{2n}$. Hence, $b$ is zero in $\mathrm{End}_\mathcal{D}(\mathcal{M})$, so $b \mathcal{M} = 0$, a contradiction. Therefore, $b \in \mathfrak{M}_n(\mathcal{A})$.

By \Cref{module_lemma}, the algebra $\mathcal{A}/\mathfrak{M}_n(\mathcal{A})$ is a subdirect product of prime algebras that are PI-equivalent to $\mathrm{M}_m(\mathbb{F})$, for some $m\le n$ and some $\mathbf{k}$-algebra $\mathbb{F}$ that is a field. Since each $\mathrm{M}_m(\mathbb{F})$ satisfies all the multilinear polynomial identities of $\mathrm{M}_n(\mathbf{k})$, the proof is complete.
\end{proof}

In particular, we obtain the following criterion for determining whether an Artinian algebra is PI:

\begin{Remark}\label{prop2}
Let $\mathcal{A}$ be an Artinian $\mathbf{k}$-algebra. Then $\mathcal{A}$ is PI if and only if
\[
\mathcal{B}(\mathcal{A}) = \mathfrak{M}_\infty(\mathcal{A}).
\]
Indeed, \Cref{prop19} tells us that the equality holds if $\mathcal{A}$ is PI. Conversely, since $\mathcal{A}$ is Artinian, $\mathcal{B}(\mathcal{A})$ is nilpotent. Moreover, $\mathfrak{M}_\infty(\mathcal{A}) = \mathfrak{M}_n(\mathcal{A})$ for some $n \in \mathbb{N}$. Hence, combining with \Cref{prop1}, $\mathcal{A}$ satisfies a power of the standard polynomial.
\end{Remark}

Note that a non-PI division ring $\mathcal{D}$ is Artinian, however $\mathcal{B}(\mathcal{D})\ne\mathfrak{M}_\infty(\mathcal{D})$. In addition, \Cref{UT} shows that the Artinian condition in the previous proposition is essential.

%The following examples illustrate two situations in which the previous proposition may fail if the Artinian condition is removed. Either $\mathcal{B}(\mathcal{A})$ is not nilpotent, or $\mathcal{A}$ admits an irreducible $Q(\mathcal{A}_0)$-module $\mathcal{M}$ with $d(\mathcal{M}) = \infty$.

% \begin{Example}
% Let $\mathcal{A}$ be a unital non-Artinian simple ring. Then, although $\mathcal{A}/\mathfrak{M}_n(\mathcal{A})$ is PI (it is the zero ring) for each $n \in \mathbb{N} \cup \{\infty\}$, the ring $\mathcal{A}$ itself is not PI.
% \end{Example}

An alternative description of the nonmatricial ideals is the following. %We denote by $\bigcup_nP_n$ the set of all multilinear polynomials.
\begin{Prop}\label{prop3}
The ideal $\mathfrak{M}_n(\mathcal{A})$ is the smallest semiprime ideal $K \subseteq \mathcal{A}$ such that $\mathcal{A}/K$ satisfies all multilinear polynomials identities of $\mathrm{M}_n(\mathbf{k})$.
\end{Prop}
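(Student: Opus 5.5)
The proof has two parts. First, $\mathfrak{M}_n(\mathcal{A})$ is itself a semiprime ideal whose quotient satisfies every multilinear identity of $\mathrm{M}_n(\mathbf{k})$. Second, it is contained in every semiprime ideal $K$ with that property.

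For the first part, by definition (and \Cref{module_lemma}), $\mathfrak{M}_n(\mathcal{A})$ is the intersection of the annihilators $\mathrm{Ann}_{\mathcal{A}}(\mathcal{M})$. Here $\mathcal{A}_0$ runs over the prime homomorphic images of $\mathcal{A}$ and $\mathcal{M}$ over the irreducible $Q(\mathcal{A}_0)$-modules with $d(\mathcal{M})\le n$. By \Cref{module_lemma} each such annihilator is a prime ideal, so $\mathfrak{M}_n(\mathcal{A})$ is an intersection of primes and hence semiprime. That $\mathcal{A}/\mathfrak{M}_n(\mathcal{A})$ satisfies all multilinear identities of $\mathrm{M}_n(\mathbf{k})$ is exactly \Cref{prop1}.

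For minimality, let $K$ be a semiprime ideal such that $\mathcal{A}/K$ satisfies the multilinear identities of $\mathrm{M}_n(\mathbf{k})$. Then $K$ is the intersection of the prime ideals $P\supseteq K$. Fix such a $P$ and put $\mathcal{A}_0=\mathcal{A}/P$. It is prime, and it satisfies $\mathrm{st}_{2n}$ because $\mathrm{st}_{2n}$ is multilinear and holds in $\mathrm{M}_n(\mathbf{k})$; in particular $\mathcal{A}_0$ is PI. By Posner's theorem, $Q(\mathcal{A}_0)$ is simple and finite-dimensional over its center $Z$, which is a field. Moreover $Q(\mathcal{A}_0)=C(\mathcal{A}_0)\mathcal{A}_0$ satisfies every multilinear identity of $\mathcal{A}_0$, since multilinear identities pass to central extensions $C\otimes$-spans. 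Hence $Q(\mathcal{A}_0)$ satisfies $\mathrm{st}_{2n}$.

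Take an irreducible $Q(\mathcal{A}_0)$-module $\mathcal{M}$. As in \Cref{obs}, $Q(\mathcal{A}_0)\cong \mathrm{End}_{\mathcal{D}}(\mathcal{M})\cong\mathrm{M}_k(\mathcal{D})$, and $\mathcal{M}$ is faithful. Extending scalars to a splitting field of $Z(\mathcal{D})$ (again legitimate for multilinear identities) gives $\mathrm{M}_{d(\mathcal{M})}(\bar{Z})$. This algebra satisfies $\mathrm{st}_{2n}$, and $\mathrm{M}_m$ over a field does not satisfy $\mathrm{st}_{2n}$ when $m>n$ (the staircase evaluation $e_{11},e_{12},e_{22},\dots$ of length $2m-1\ge 2n$ is nonzero). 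Therefore $d(\mathcal{M})\le n$.

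Consequently every element of $\mathfrak{M}_n(\mathcal{A})$ annihilates $\mathcal{M}$. Faithfulness then gives $\mathfrak{M}_n(\mathcal{A})\subseteq \mathrm{Ann}_{\mathcal{A}}(\mathcal{M})=P$. Intersecting over all primes $P\supseteq K$ yields $\mathfrak{M}_n(\mathcal{A})\subseteq K$, which proves minimality.

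The main obstacle is the bound $d(\mathcal{M})\le n$. It requires checking carefully that multilinear identities transfer from $\mathcal{A}_0$ to $Q(\mathcal{A}_0)$ and then to the split form $\mathrm{M}_{d}(\bar Z)$, without any assumption that $\mathbf{k}$ or the center is infinite. This is why the statement is restricted to multilinear identities. I would handle it by noting that a multilinear polynomial vanishing on a spanning set over a commutative ring of scalars vanishes on the whole algebra, and applying this to the spans $C(\mathcal{A}_0)\mathcal{A}_0$ and $\bar Z\otimes_Z Q(\mathcal{A}_0)$.
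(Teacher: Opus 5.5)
Your proposal is correct and follows essentially the paper's argument. Semiprimeness comes from \Cref{module_lemma}, and the quotient property from \Cref{prop1}, which the paper uses implicitly. Minimality comes from writing $K$ as an intersection of primes $P$ and noting that a faithful irreducible $Q(\mathcal{A}/P)$-module $\mathcal{M}$ has $d(\mathcal{M})\le n$, so $\mathfrak{M}_n(\mathcal{A})\subseteq P$.

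The only difference is that you justify $d(\mathcal{M})\le n$, a step the paper only asserts. You do this by extending scalars to a splitting field of $\mathcal{D}$ (you wrote ``of $Z(\mathcal{D})$'') and using the staircase evaluation truncated to its first $2n$ entries.
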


\begin{proof}
From \Cref{module_lemma}, $\mathfrak{M}_n(\mathcal{A})$ is a semiprime ideal. First, let $P \subseteq \mathcal{A}$ be a prime ideal such that $\mathrm{Id}(\mathcal{A}/P)$ satisfies 
all multilinear polynomials identities of $\mathrm{Id}(\mathrm{M}_n(\mathbf{k}))$. Then $Q(\mathcal{A}/P)$ admits an irreducible module $\mathcal{M}$ with $d(\mathcal{M}) \le n$. In particular, any element of $\mathfrak{M}_n(\mathcal{A})$ annihilates $\mathcal{M}$. It means that $\mathfrak{M}_n(\mathcal{A})\subseteq P$.

Now, let $K = \bigcap_{P \in \Lambda} P$ be a semiprime ideal satisfying the condition that the multilinear polynomials in $\mathrm{Id}(\mathrm{M}_n(\mathbf{k}))$ are contained in $\mathrm{Id}(\mathcal{A}/K)$. Then $\mathcal{A}/K$ is a subdirect product of the algebras $\{\mathcal{A}/P\}_{P \in \Lambda}$. Each $\mathcal{A}/P$ satisfies all multilinear polynomial identities of $\mathrm{M}_n(\mathbf{k})$, and therefore each $P$ contains $\mathfrak{M}_n(\mathcal{A})$. It follows that $K$ contains $\mathfrak{M}_n(\mathcal{A})$ as well.
\end{proof}

\begin{Example}
Let $\mathbf{k} = \mathbb{F}$ be a field. We have $\mathrm{UT}_n(\mathbb{F}) \subseteq \mathrm{M}_n(\mathbb{F})$, so
\[
\mathrm{Id}(\mathrm{M}_n(\mathbb{F})) \subseteq \mathrm{Id}(\mathrm{UT}_n(\mathbb{F})).
\] 
However, since $\mathfrak{M}_n(\mathrm{UT}_n(\mathbb{F}))$ is the set of strictly upper triangular matrices, we have
\[
\mathfrak{M}_n(\mathrm{UT}_n(\mathbb{F})) \not\subseteq \{0\}.
\] 
Thus, the semiprimeness condition in \Cref{prop3} cannot be removed.
\end{Example}

As a consequence, we obtain the following characterization of the T-ideal of polynomial identities of matrix algebras when $\mathbf{k}=\mathbb{F}$ is a field of characteristic zero.
\begin{Prop}
Let $\mathbf{k}=\mathbb{F}$ be a field of characteristic zero and let $X$ be a countably infinite set. Then, for each $n \in \mathbb{N}$, one has
\[
\mathfrak{M}_n(\mathbb{F}\langle X \rangle) = \mathrm{Id}(\mathrm{M}_n(\mathbb{F})).
\] 
A similar result holds for an arbitrary set $X$.
\end{Prop}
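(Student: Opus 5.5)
We prove the two inclusions separately, with \Cref{prop3} as the main tool. Write $\mathcal{F}=\mathbb{F}\langle X\rangle$ and $I=\mathrm{Id}(\mathrm{M}_n(\mathbb{F}))$. In characteristic zero every T-ideal is generated by its multilinear elements. Hence $\mathcal{F}/K$ satisfies all multilinear identities of $\mathrm{M}_n(\mathbb{F})$ if and only if it satisfies all of $I$. Note also that $\mathrm{M}_n(\mathbb{F})$ and $\mathrm{M}_n(\mathbf{k})$ coincide here.

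\emph{First inclusion, $\mathfrak{M}_n(\mathcal{F})\subseteq I$.} We show that $I$ is itself admissible in \Cref{prop3}. The quotient $\mathcal{F}/I$ is the relatively free algebra of $\mathrm{M}_n(\mathbb{F})$, so it satisfies every identity of $\mathrm{M}_n(\mathbb{F})$. It remains to see that $I$ is semiprime. By a classical result, $\mathcal{F}/I$ embeds in the algebra of generic matrices inside $\mathrm{M}_n(\mathbb{F}[\xi_{ij}^{(k)}])$. That algebra is a domain's matrix algebra subalgebra, and in fact it is prime: its central localization is a division algebra, or alternatively it is PI-equivalent to $\mathrm{M}_n$ over a domain. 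So $I$ is a prime ideal, and in particular semiprime. Minimality in \Cref{prop3} then gives $\mathfrak{M}_n(\mathcal{F})\subseteq I$. If one wishes to avoid generic matrices, an alternative is to write $I=\bigcap_\varphi \ker\varphi$ over all homomorphisms $\varphi\colon\mathcal{F}\to\mathrm{M}_n(\mathbb{F})$. Each image $\varphi(\mathcal{F})$ is a subalgebra of $\mathrm{M}_n(\mathbb{F})$, but it need not be prime, so this route needs more care. I prefer the generic matrix argument.

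\emph{Second inclusion, $I\subseteq\mathfrak{M}_n(\mathcal{F})$.} By \Cref{prop1}, $\mathcal{F}/\mathfrak{M}_n(\mathcal{F})$ satisfies all multilinear identities of $\mathrm{M}_n(\mathbb{F})$. By the characteristic zero remark above, it therefore satisfies all of $I$. Now take any $f(x_1,\dots,x_k)\in I$. Evaluating $f$ at the images of $x_1,\dots,x_k$ in $\mathcal{F}/\mathfrak{M}_n(\mathcal{F})$ gives zero, and this evaluation is exactly $f+\mathfrak{M}_n(\mathcal{F})$. Hence $f\in\mathfrak{M}_n(\mathcal{F})$.

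For an arbitrary set $X$ the same two arguments apply verbatim. Primeness of the generic matrix algebra holds for any number of generators, and the evaluation argument is independent of the cardinality of $X$. One detail needs care when $|X|<n$: the generic matrix algebra is still prime once $|X|\ge 2$, but for $|X|=1$ we have $\mathcal{F}=\mathbb{F}[x]$, where $I=0$ and $\mathfrak{M}_n(\mathcal{F})=0$ trivially.

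The main obstacle is the primeness of $I$ in the first inclusion. This requires the generic matrix algebra description of $\mathcal{F}/I$ and the fact that this algebra is a prime PI-algebra, namely Amitsur's theorem that its central quotient is a division ring. Every other step is formal from \Cref{prop1} and \Cref{prop3}.
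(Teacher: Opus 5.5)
Your proof is correct. The inclusion $\mathrm{Id}(\mathrm{M}_n(\mathbb{F}))\subseteq\mathfrak{M}_n(\mathbb{F}\langle X\rangle)$ is the paper's argument via \Cref{prop1}. You also make explicit the characteristic-zero step from multilinear identities to all identities, which the paper leaves tacit.

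For the reverse inclusion $\mathfrak{M}_n(\mathbb{F}\langle X\rangle)\subseteq\mathrm{Id}(\mathrm{M}_n(\mathbb{F}))$ you take a different route. The paper argues elementwise. An element of $\mathfrak{M}_n(\mathbb{F}\langle X\rangle)$ vanishes in every prime image satisfying the identities of $\mathrm{M}_n(\mathbb{F})$. The paper then says that ``in particular'' it lies in the kernel of every homomorphism to $\mathrm{M}_n(\mathbb{F})$, and hence is an identity. As you noticed, that middle step is only immediate for homomorphisms with prime image. It is repaired by restricting to surjective evaluations, whose kernels are maximal with quotient $\mathrm{M}_n(\mathbb{F})$. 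This restriction is enough because $X$ is infinite: an evaluation witnessing $f\notin\mathrm{Id}(\mathrm{M}_n(\mathbb{F}))$ can be changed on variables not occurring in $f$ so that it hits a generating set of $\mathrm{M}_n(\mathbb{F})$.

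You instead prove that the T-ideal $\mathrm{Id}(\mathrm{M}_n(\mathbb{F}))$ is itself a prime ideal of $\mathbb{F}\langle X\rangle$, via the generic matrix algebra, and then conclude by the minimality in \Cref{prop3}. The paper's route stays elementary but genuinely uses that $X$ is infinite, or else needs a Zariski-density argument when $X$ is finite. Yours imports the primeness of generic matrices, but it treats every $X$ uniformly and reduces the statement to the two structural facts \Cref{prop1} and \Cref{prop3}.

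One small correction: $|X|=1$ is not an exception to primeness. The one-generator generic matrix algebra is a polynomial ring in one variable, hence a domain. Your separate check of that case is correct but unnecessary.
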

\begin{proof}
By \Cref{prop1},
\[
\mathrm{Id}(\mathrm{M}_n(\mathbb{F})) \subseteq \mathrm{Id}(\mathbb{F}\langle X\rangle/\mathfrak{M}_n(\mathbb{F}\langle X\rangle)).
\]
Thus, $\mathrm{Id}(\mathrm{M}_n(\mathbb{F}))\subseteq\mathfrak{M}_n(\mathbb{F}\langle X\rangle)$.

Conversely, let $a\in\mathfrak{M}_n(\mathbb{F}\langle X\rangle)$. Then $a$ annihilates each irreducible $Q(\mathcal{A}_0)$-module $\mathcal{M}$ with $d(\mathcal{M})\le n$, where $\mathcal{A}_0$ is a prime homomorphic image of $\mathbb{F}\langle X\rangle$. Thus, $a$ is zero in each prime homomorphic image of $\mathbb{F}\langle X\rangle$ that satisfies all identities of $\mathrm{M}_n(\mathbb{F})$. In particular, $a$ is in the kernel of each homomorphism $\mathbb{F}\langle X\rangle\to\mathrm{M}_n(\mathbb{F})$. Hence, $a\in\mathrm{Id}(\mathrm{M}_n(\mathbb{F}))$.
\end{proof}

\section{On varieties not containing matrix algebras of higher order\label{sec5}}

Before we state our main result, recall the definition of complexity:
\begin{Def}[\cite{Belov}]\label{complexity}
A variety of $\mathbf{k}$-algebras $\mathscr{V}$ is said to have \emph{complexity $n\ge0$} if $n$ is the greatest integer such that   $\mathrm{M}_n(\mathbb{F})\in\overline{\mathscr{V}}$, for some simple homomorphic image $\mathbb{F}$ of $\mathbf{k}$, where $\overline{\mathscr{V}}$ is the variety generated by the homogeneous polynomial identities of $\mathscr{V}$.
\end{Def}

Finally, we obtain the following extension of \Cref{characterization_ringNoetherian}.

\begin{Thm}\label{characterization_nmatrix}
Let $\mathbf{k}$ be a unital commutative ring, $\mathscr{V}$ a variety of associative $\mathbf{k}$-algebras, and fix $n\in\mathbb{N}$. The following assertions are equivalent:
\begin{enumerate}
    \item Every simple $\mathbf{k}$-algebra in $\mathscr{V}$ has dimension over its center at most $(n-1)^2$;
    \item\label{iii.M_n} $\mathrm{M}_{n}(\mathbb{F}) \notin \mathscr{V}$ for each fraction field $\mathbb{F}$ of a prime homomorphic image of $\mathbf{k}$;
    \item[(ii')]\label{iii'.M_n} $\mathrm{M}_{n}(\mathbb{F}) \notin \mathscr{V}$ for each simple homomorphic image of $\mathbf{k}$;
    \item\label{iii.subdirectJ} For each $\mathcal{A} \in \mathscr{V}$, $\mathcal{A}/J(\mathcal{A})$ is a subdirect product of simple algebras with dimension over their center at most $(n-1)^2$;
    \item\label{iii.subdirect} For each $\mathcal{A} \in \mathscr{V}$, $\mathcal{A}/\mathcal{B}(\mathcal{A})$ is a subdirect product of prime algebras $\mathcal{P}$, where $\dim_C C\mathcal{P} \le (n-1)^2$ and $C$ is the extended centroid of $\mathcal{P}$;
    \item\label{iii.powercommutator} There exists $m\in\mathbb{N}$ such that $\mathrm{st}_{2(n-1)}(x_1,\ldots,x_{2n-2})^m \in \mathrm{Id}(\mathscr{V})$;
    \item\label{iii.jacobson} For every $\mathcal{A} \in \mathscr{V}$,
    \[
        \mathcal{B}(\mathcal{A}) = \{a \in \mathcal{A} \mid \text{$a$ is $(n-1)$-nonmatricial}\};
    \]
    \item\label{iii.jacobsonfg} For every finitely generated $\mathcal{A} \in \mathscr{V}$,
    \[
        \mathcal{B}(\mathcal{A}) = \{a \in \mathcal{A} \mid \text{$a$ is $(n-1)$-nonmatricial}\};
    \]
    \item\label{iii.someideal} For any $\mathcal{A} \in \mathscr{V}$ and $(n-1)$-nonmatricial element $a \in \mathcal{A}$, at least one of the following ideals is nil: $a\mathcal{A}$, $a\mathcal{A}+\mathbf{k}a$, $\mathcal{A}a$, $\mathcal{A}a+\mathbf{k}a$, $\mathcal{A}a\mathcal{A}$, $\mathcal{A}a\mathcal{A} + \mathcal{A}a + a\mathcal{A} + \mathbf{k}a$;
    \item\label{iii.allideals} For any $\mathcal{A} \in \mathscr{V}$ and $(n-1)$-nonmatricial element $a \in \mathcal{A}$, all of the above ideals are nil;
    \item\label{iii.sumnilelements} For every $\mathcal{A} \in \mathscr{V}$ and $(n-1)$-nonmatricial elements $a,b \in \mathcal{A}$, $a+b$ is nilpotent;
    \item For every $\mathcal{A} \in \mathscr{V}$, every $(n-1)$-nonmatricial element in $\mathcal{A}$ is nilpotent;
    \item For each $\mathcal{A}\in\mathscr{V}$ and elements $a_1,\dots,a_m \in \mathcal{A}$, if all products of elements in $\{a_1,\ldots,a_m\}$ of length at most $n-1$ is nilpotent, then the subalgebra generated by $a_1,\dots,a_m$ is nilpotent;
    \item The variety $\mathscr{V}$ has complexity $<n$.
    \newcounter{contador2}
    \setcounter{contador2}{\arabic{enumi}}
\end{enumerate}
\end{Thm}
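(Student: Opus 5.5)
I will prove the theorem by copying the implication diagram of \Cref{characterization_ringNoetherian} and replacing each ingredient by its degree-$n$ analogue. The analogues are: \Cref{prime_containsM_2} by \Cref{obs}; nilpotent elements by $(n-1)$-nonmatricial elements; and $[x_1,x_2]=\mathrm{st}_2$ by $\mathrm{st}_{2(n-1)}$. Throughout, $\mathrm{M}_n(\mathbb{F})$ is the obstruction. It is also the witness for every implication back to (ii): if $\mathrm{M}_n(\mathbb{F})\in\mathscr{V}$, then $\mathrm{M}_n(\mathbb{F})$ is simple, of dimension $n^2$ over its center, and prime with trivial $\mathcal{B}$. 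By Amitsur--Levitzki and the standard fact that $\mathrm{M}_n$ satisfies no identity of degree $<2n$, it fails any power of $\mathrm{st}_{2n-2}$. It also contains $(n-1)$-nonmatricial elements that are not nilpotent, for example $1$, because its only prime image has $d=n$. The same witness is used for the sum condition and for the product condition (xii), with the matrix units $e_{i,i+1}$ and $e_{n1}$ supplying the failure.

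The forward chain runs (ii)$\Rightarrow$(iv)$\Rightarrow$(v). For (ii)$\Rightarrow$(iv), write $\mathcal{A}/\mathcal{B}(\mathcal{A})$ as a subdirect product of prime images $\mathcal{A}_0\in\mathscr{V}$. If $\mathcal{A}_0$ is not PI, its central closure has no irreducible module of finite $d$. Then the definition gives $\mathfrak{M}_{n-1}$ equal to everything, and such factors must be handled; I return to this below. If $\mathcal{A}_0$ is PI, Posner's theorem applies, and \Cref{obs} together with $\mathrm{M}_k\supseteq\mathrm{M}_n$ for $k\ge n$ forces $d\le n-1$, that is, $\dim_C C\mathcal{A}_0\le (n-1)^2$. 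For (iv)$\Rightarrow$(v), take the two-generated relatively free algebra and argue exactly as in the proof of \Cref{characterization_ringNoetherian}. By \Cref{prop1}, its quotient by $\mathcal{B}$ satisfies $\mathrm{st}_{2n-2}$. Since $\mathcal{B}$ is nil, $\mathrm{st}_{2n-2}(x_1,\dots,x_{2n-2})^m=0$, where we evaluate on the $(2n-2)$-generated relatively free algebra.

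Items (vi) through (xii) are handled next. (iv) gives $\mathfrak{M}_{n-1}(\mathcal{A})=\mathfrak{M}_\infty(\mathcal{A})$, because every prime factor has $d\le n-1$. Combined with \Cref{prop19} in the PI case, this gives (vi), and hence (vii). From (vi), the ideals in (ix), (viii) are nil, sums are nilpotent (x), and every nonmatricial element is nilpotent (xi); we get these because $\mathcal{B}$ is a nil ideal, as in the original proof. For (xii), work in the finitely generated subalgebra and its prime quotients, which are PI with $d\le n-1$. Nilpotence of all products of length at most $n-1$ gives nilpotence modulo each prime, by a Cayley--Hamilton/trace argument in $\mathrm{M}_{d}(\text{field})$: a set of matrices whose products of length $\le d$ are nilpotent generates a nilpotent algebra, via Levitzki/Wedderburn on the linear span. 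Hence the subalgebra is nilpotent modulo $\mathcal{B}$. Then \Cref{nil_nilpotent} (or strong nilpotence in $\mathcal{B}$) lifts this to nilpotence, and I expect needing Noetherian $\mathbf{k}$ at this lift. For (i) and (iii) I argue as in the original proof, using Kaplansky and \Cref{primitive_containsM_2} adapted to $n$. Finally, (ii)$\iff$(ii')$\iff$(xiii) is handled as follows. A fraction field of a domain image of $\mathbf{k}$ either is itself a simple image or is infinite. For infinite fields, the identities of $\mathrm{M}_n$ are generated by homogeneous ones, so the homogeneous closure $\overline{\mathscr{V}}$ in \Cref{complexity} does not change membership of $\mathrm{M}_n$. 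Moreover $\mathrm{M}_n(\mathbf{k}/\mathfrak m)$ and $\mathrm{M}_n(\mathrm{Frac})$ generate comparable varieties, by reduction modulo a maximal ideal.

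The main obstacle is the non-PI prime factors in (ii)$\Rightarrow$(iv). This is why I first prove (ii)$\Rightarrow$(v): a variety omitting all $\mathrm{M}_n(\mathbb{F})$ is PI. To obtain this, apply the relatively free algebra argument to a prime image via Kaplansky/Amitsur, which shows that a non-PI variety contains all matrix algebras over some $\mathbb{F}$. Once $\mathscr{V}$ is PI, Posner applies to every prime image, and the remaining implications are formal. The second delicate point is the lift in (xii). If \Cref{nil_nilpotent} is unavailable for non-Noetherian $\mathbf{k}$, I would first reduce to the finitely generated $\mathbb{Z}$-subring of coefficients, using that identities are defined over finitely many scalars.
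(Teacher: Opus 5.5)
\paragraph{Gap: the matrix lemma behind (xii).} Your skeleton is sound and mostly parallels the paper:
\begin{itemize}
\item $\mathrm{M}_n(\mathbb{F})$ serves as the witness for every return to (ii);
\item Posner plus \Cref{obs} give (ii)$\Rightarrow$(iv);
\item the relatively free algebra on $2n-2$ generators gives (v);
\item ``$\mathcal{B}(\mathcal{A})$ is a nil ideal'' gives (vi)--(xi).
\end{itemize}
You also rightly flag the non-PI prime factors, which the paper passes over.

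The genuine gap is in (xii). Your reduction is fine. If every prime image of $\mathcal{A}_1=\mathrm{alg}(a_1,\dots,a_m)$ is nilpotent, then it is zero. Hence $\mathcal{A}_1$ has no prime ideals, and $\mathcal{A}_1=\mathcal{B}(\mathcal{A}_1)$ is nil. A finitely generated nil algebra satisfying $\mathrm{st}_{2n-2}^m$ is nilpotent: apply \Cref{nil_nilpotent} to the subring generated over $\mathbb{Z}$ by the $a_i$. No Noetherian hypothesis on $\mathbf{k}$ is needed.

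However, everything then rests on a matrix lemma: if all products of length $\le d$ of $b_1,\dots,b_m\in\mathrm{M}_d(K)$ are nilpotent, then $\mathrm{alg}(b_1,\dots,b_m)$ is nilpotent. The tools you name do not prove it.
\begin{itemize}
\item Levitzki's theorem needs every element of the multiplicative semigroup to be nilpotent.
\item Wedderburn's theorem needs a multiplicatively closed spanning set of nilpotents.
\item The trace argument needs $\mathrm{tr}$ to vanish on the whole algebra generated.
\end{itemize}
Your hypothesis controls only words of length $\le d$. Their span is in general neither closed under products nor equal to the algebra, and longer words are unconstrained.

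Passing from ``short words nilpotent'' to ``all words nilpotent'' with the sharp bound is exactly Shestakov's conjecture, proved by Ufnarovskii and extended by Belov. The paper uses this as a black box for (xiii)$\Rightarrow$(xii) \cite{Ufnarovskii,Belov}. Shirshov's height theorem cannot replace it. It only yields nilpotence when all words up to the PI degree are nilpotent, and that degree is $\ge 2n-2>n-1$ as soon as $\mathrm{M}_{n-1}(\mathbb{F})\in\mathscr{V}$. You must cite that result; its matrix case suffices for your route.

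\paragraph{Smaller points.}
\begin{itemize}
\item For (v)$\Rightarrow$(ii): ``no identity of degree $<2n$'' only shows $\mathrm{st}_{2n-2}\notin\mathrm{Id}(\mathrm{M}_n(\mathbb{F}))$; it does not exclude its powers. You need a non-nilpotent value. For example, $\mathrm{st}_{2n-2}(e_{12},e_{21},e_{23},e_{32},\dots,e_{n-1,n},e_{n,n-1})$ is diagonal with $(1,1)$-entry $\pm1$. This holds because the only closed walk from $1$ using each of these edges once is $1\to2\to\cdots\to n\to\cdots\to1$.
\item For (vii)$\Rightarrow$(ii): the witness must be finitely generated, and $\mathrm{M}_n(\mathbb{F})$ need not be. Use $\mathrm{M}_n(\mathbf{k}/P)\subseteq\mathrm{M}_n(\mathbb{F})$ instead.
\item For (ii)$\Leftrightarrow$(xiii): your argument only covers infinite fields. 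For a finite simple image $\mathbb{F}_q$, the identities of $\mathrm{M}_n(\mathbb{F}_q)$ are not closed under taking homogeneous components. Already for $n=1$, $x^q-x$ is an identity of $\mathbb{F}_q$ while $x$ is not. So ``the homogeneous closure does not change membership of $\mathrm{M}_n$'' is unjustified there, and that case remains unproved.
\end{itemize}
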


\begin{proof}
$(i) \Rightarrow (ii)$: Clear.

The following implications follow analogously to the proof of \Cref{characterization_ringNoetherian}:
\[
(ii) \Rightarrow (i), \quad (ii) \Rightarrow (iv) \Rightarrow (v) \Rightarrow (ii).
\]

$(iv) \Rightarrow (vi)$: Let $a \in \mathcal{A}$ be $n$-nonmatricial. Assume $a \notin \mathcal{B}(\mathcal{A})$. Then there exists a prime homomorphic image $\mathcal{A}_0$ of $\mathcal{A}$ such that the image of $a$ is nonzero. From $(iv)$, each irreducible $Q(\mathcal{A}_0)$-module $\mathcal{M}$ satisfies $d(\mathcal{M}) \le n-1$. This contradicts the definition of $(n-1)$-nonmatricial. Hence,
\[
\{a \in \mathcal{A} \mid \text{$a$ is $(n-1)$-nonmatricial}\} \subseteq \mathcal{B}(\mathcal{A}).
\]

The converse follows from \Cref{module_lemma}.

$(vi) \Rightarrow (vii)$: Clear.

$(vii) \Rightarrow (ix)$: Same as in \Cref{characterization_ringNoetherian}.

$(ix) \Rightarrow (viii)$: Clear.

$(viii) \Rightarrow (ii)$: In $\mathrm{M}_{n}(\mathbb{F})$, the element $e_{11}$ is $(n-1)$-nonmatricial, but none of the ideals it generates is nil.

$(vi) \Rightarrow (x)$: Same as in \Cref{characterization_ringNoetherian}.

$(x) \Rightarrow (ii)$: In $\mathrm{M}_{n}(\mathbb{F})$, the elements $e_{12}$ and $e_{21}$ are $(n-1)$-nonmatricial, but $e_{12}+e_{21}$ is not nilpotent.

$(iv) \Rightarrow (iii)$: Each primitive PI algebra is simple with center equal to its extended centroid. Moreover, $\mathcal{A}/J(\mathcal{A})$ is a quotient of $\mathcal{A}/\mathcal{B}(\mathcal{A})$.

$(iii) \Rightarrow (ii)$: Clear.

$(vi) \Rightarrow (xi)$: Clear.

$(xi) \Rightarrow (ii)$: In $\mathrm{M}_{n}(\mathbb{F})$, every element is $(n-1)$-nonmatricial, but not every element is nilpotent.

$(ii)\Rightarrow(ii')\Rightarrow(xiii)$: It is clear from \Cref{complexity}, since $\overline{\mathscr{V}}\subseteq\mathscr{V}$.

$(xiii)\Rightarrow(xii)$: The result follows from \cite{Ufnarovskii} (see \cite[Theorem 1]{Belov} as well).

$(xii)\Rightarrow(ii)$: let $\mathbb{F}$ be the field of fraction of a prime homomorphic image of $\mathbf{k}$. Thus, the elements $e_{12}$, \dots, $e_{n-1,n}$, $e_{n1}\in\mathrm{M}_n(\mathbb{F})$ satisfy the hypothesis; however, it does not generate a nilpotent algebra. Thus, $\mathrm{M}_n(\mathbb{F})\notin\mathscr{V}$.
\end{proof}
\begin{Remark}
Consider the following statement:
\begin{enumerate}
\setcounter{enumi}{\arabic{contador2}}
\item\label{iii.commutators} For every finitely generated $\mathcal{A} \in \mathscr{V}$, there exists $m\in\mathbb{N}$ such that
    \[
       \mathrm{st}_{2(n-1)}(x_1,\ldots,x_{2n-2}) \cdots \mathrm{st}_{2(n-1)}(x_{2(m-1)(n-1)+1},\ldots,x_{2m(n-1)}) \in \mathrm{Id}(\mathcal{A});
    \]
\end{enumerate}
Similarly as in \Cref{remarknovo}, the above statement implies any of the statements of \Cref{characterization_nmatrix}. In addition, if $\mathbf{k}$ is Noetherian, then any of the statements of \Cref{characterization_nmatrix} implies the above statement. It is currently unknown whether (xiv) is equivalent to the others without the Noetherian assumption.
\end{Remark}

\begin{Def}
A variety $\mathscr{V}$ is said to be \emph{$n$-nonmatrix} if it satisfies any of the equivalent conditions of \Cref{characterization_nmatrix} for the number $n$.
\end{Def}
It is clear that if a variety is $n$-nonmatrix, then it is $m$-nonmatrix for each $m\ge n$. A variety is $1$-nonmatrix if and only all of its algebras are nil.

\begin{Remark}
By convention, the trivial variety is $0$-nonmatrix, and the variety of all associative algebras is $\infty$-nonmatrix. Thus, any variety of associative $\mathbf{k}$-algebras is $n$-nonmatrix for some $n \in \mathbb{N} \cup \{0,\infty\}$.  

Indeed, let $\mathscr{V}$ be a proper variety, and let $\mathcal{F}_\mathscr{V} := \mathbf{k}_\mathscr{V}\langle X \rangle \in \mathscr{V}$ be a free algebra generated by an infinite set of variables. Then, $\mathcal{F}_\mathscr{V}/\mathcal{B}(\mathcal{F}_\mathscr{V})$ is a subdirect product of prime PI-algebras. By Posner's Theorem, each of these algebras is PI-equivalent to a matrix algebra, and the sizes of these matrices are bounded by some maximum $n$. Hence, $\mathrm{st}_{2n}(x_1,\ldots,x_{2n}) \in \mathcal{B}(\mathcal{F}_\mathscr{V})$.  

Since every element of the Baer radical is nilpotent, there exists $m \in \mathbb{N}$ such that $(\mathrm{st}_{2n}(x_1,\ldots,x_{2n}))^m = 0$. Therefore, $\mathrm{st}_{2n}^m \in \mathrm{Id}(\mathscr{V})$, and from \emph{(vi)} of \Cref{characterization_nmatrix}, we conclude that $\mathscr{V}$ is an $(n+1)$-nonmatrix variety.
\end{Remark}

% \begin{Prop}
% Let $\mathbf{k}$ be a unital commutative ring and $\mathscr{V}$ a variety of associative $\mathbf{k}$-algebras. If $\mathscr{V}$ is a $n$-nonmatrix variety, then for each $\mathcal{A}\in\mathscr{V}$ and elements $a_1,\dots,a_m \in \mathcal{A}$, if all products of elements in $\{a_1,\ldots,a_m\}$ of length at most $n-1$ is nilpotent, then the subalgebra generated by $a_1,\dots,a_m$ is nilpotent. The converse holds.
% \end{Prop}
% \begin{proof}
% It follows from \cite[Theorem B]{Ufnarovskii}. For the converse, let $\mathbb{F}$ be the field of fraction of a prime homomorphic image of $\mathbf{k}$. Thus, the elements $e_{12}$, \dots, $e_{n-1,n}$, $e_{n1}\in\mathrm{M}_n(\mathbb{F})$ satisfy the hypothesis; however, it does not generate a nilpotent algebra.
% \end{proof}

An extension of \Cref{prop} to the context of $n$-nonmatrix varieties is thanks to a conjecture by I.P.~Shestakov, answered by V.A.~Ufnarovskii \cite{Ufnarovskii}, and then generalized by A.Ya.~Belov:

\begin{Prop}\label{Shestak}
Let $\mathbf{k}$ be a unital commutative ring and $\mathscr{V}$ a variety of associative $\mathbf{k}$-algebras. If $\mathscr{V}$ is a $n$-nonmatrix variety, then for each $\mathcal{A}\in\mathscr{V}$ and elements $a_1,\dots,a_m \in \mathcal{A}$, if all products of elements in $\{a_1,\ldots,a_m\}$ of length at most $n-1$ is integral, then the subalgebra generated by $a_1,\dots,a_m$ is $\mathbf{k}$-finitely generated. The converse holds if $\mathbf{k}$ contains an infinite prime homomorphic image.
\end{Prop}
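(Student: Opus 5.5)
The forward direction is essentially the Shestakov--Ufnarovskii--Belov theorem, so my plan is to reduce to it, following the pattern of the proof of \Cref{prop}. First I would replace $\mathcal{A}$ by $\mathcal{A}_0=\mathrm{alg}(a_1,\dots,a_m)$, which still lies in $\mathscr{V}$. By \Cref{characterization_nmatrix}, an $n$-nonmatrix variety has complexity $<n$. Belov's theorem \cite[Theorem 1]{Belov}, generalizing \cite{Ufnarovskii}, states that in a variety of complexity $<n$ the following holds: if all words of length at most $n-1$ in the generators are integral over $\mathbf{k}$, then the algebra is finitely generated as a $\mathbf{k}$-module. Applying this to $\mathcal{A}_0$ gives the conclusion directly.

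Should a more self-contained route be preferred, I would argue as follows. Since $\mathscr{V}$ is proper, $\mathcal{A}_0$ is PI, so by Kaplansky's theorem \cite[Theorem 8.2.1]{AGPR} it suffices to show that $\mathcal{A}_0$ is integral over $\mathbf{k}$. Integrality is preserved by nil extensions, and $\mathcal{B}(\mathcal{A}_0)$ is nil. It is therefore enough to show that every prime quotient $\mathcal{P}$ of $\mathcal{A}_0$ is integral, with the integrality degree bounded uniformly so that the subdirect product behaves. By \Cref{characterization_nmatrix}(iv), each such $\mathcal{P}$ embeds in a central simple algebra of degree $\le n-1$ over its extended centroid.

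In that setting I would use the key step of Belov's argument: the words of length $\le n-1$ span the central closure over the centre, and the traces of the generating words control the characteristic polynomials of all elements. This is the main obstacle. Passing from integrality of short words to integrality of arbitrary elements needs the trace-ring/Shirshov-height machinery, which is precisely what the cited result supplies, so I would simply invoke \cite{Ufnarovskii,Belov} there.

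For the converse, let $\mathbb{F}$ be the fraction field of an infinite prime homomorphic image of $\mathbf{k}$, and suppose $\mathscr{V}$ is not $n$-nonmatrix. By (ii) of \Cref{characterization_nmatrix}, $\mathrm{M}_n(\mathbb{F}')\in\mathscr{V}$ for some fraction field $\mathbb{F}'$ of a prime image. I would need this for an infinite one; if $\mathbb{F}'$ is finite, this step requires care, which is the subtle point of this direction. Assuming $\mathbb{F}'$ is infinite, $\mathrm{M}_n(\mathbb{F}'[t])\cong\mathrm{M}_n(\mathbb{F}')\otimes\mathbb{F}'[t]$ satisfies the same identities and so lies in $\mathscr{V}$. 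I would take $a_i=te_{i,i+1}$ for $i<n$ and $a_n=te_{n1}$. Every product of fewer than $n$ of these is either zero or a multiple of a single off-diagonal matrix unit, hence square-zero and so integral. However, the product $a_1\cdots a_n=t^ne_{11}$ has nonzero powers $t^{kn}e_{11}$, which span an infinite-dimensional $\mathbb{F}'$-space. The generated subalgebra is therefore not finitely generated over $\mathbf{k}$, which gives the required contradiction.
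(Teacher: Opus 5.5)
Your forward direction is the same as the paper's: both simply cite the Ufnarovskii--\v{C}ekanu--Belov theorem (\cite{C94}, \cite[Theorem 1]{Belov}). Your ``self-contained'' sketch does not stand on its own. Even a uniform degree bound in the prime quotients would not give a single monic polynomial over $\mathbf{k}$ for an element of the subdirect product, and the claim that short words span the central closure is unsupported. Since you fall back on the citation anyway, this is harmless.

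The genuine gap is in the converse, exactly where you flagged it, and it cannot be closed. Negating $n$-nonmatrix only gives $\mathrm{M}_n(\mathbb{F}')\in\mathscr{V}$ for \emph{some} fraction field $\mathbb{F}'$ of a prime image of $\mathbf{k}$. Your construction (adjoining $t$ and using $te_{12},\dots,te_{n-1,n},te_{n1}$) needs $\mathbb{F}'$ to be infinite. Under the stated hypothesis the finite case yields a counterexample. Let $\mathbf{k}=\mathbb{Z}$, whose prime image $\mathbb{Z}$ is infinite, and let $\mathscr{V}=\mathrm{var}(\mathrm{M}_n(\mathbb{F}_p))$. The $m$-generated relatively free algebra of $\mathscr{V}$ embeds in a product of $p^{n^2m}$ copies of $\mathrm{M}_n(\mathbb{F}_p)$. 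Hence every $\mathrm{alg}(a_1,\dots,a_m)$ inside an algebra of $\mathscr{V}$ is a finite ring, and so a finitely generated $\mathbb{Z}$-module, whatever the $a_i$ are. Yet $\mathrm{M}_n(\mathbb{F}_p)\in\mathscr{V}$, so condition (ii) of \Cref{characterization_nmatrix} fails and $\mathscr{V}$ is not $n$-nonmatrix. This is just the finite-field example following \Cref{prop}, moved to a base ring that has both finite and infinite prime images.

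The paper's one-line proof, ``analogous to the converse of \Cref{prop}'', has the same hole. That argument fixes $\mathbb{F}$ as the fraction field of the given infinite image and rules out $\mathrm{M}_n(\mathbb{F})$ only for that single field. What your construction actually proves is that the finiteness property forces $\mathrm{M}_n(\mathbb{F}')\notin\mathscr{V}$ for every \emph{infinite} fraction field $\mathbb{F}'$ of a prime image of $\mathbf{k}$. So the converse holds, with your proof unchanged, under the stronger hypothesis that every prime homomorphic image of $\mathbf{k}$ is infinite. Equivalently, no maximal ideal of $\mathbf{k}$ has finite residue field; this covers, e.g., the case where $\mathbf{k}$ is an infinite field. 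With only ``some infinite prime image'' the converse is false, so no extra ``care'' will rescue the finite case.
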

\begin{proof}
It follows from \cite{C94} (see also \cite[Theorem 1]{Belov}). The converse is analogous to the converse of \Cref{prop} and the proof of (xii)$\Rightarrow$(ii) of \Cref{characterization_nmatrix}.
\end{proof}

\begin{Remark}
As in \Cref{remark16}, for a given variety $\mathscr{V}$ of associative $\mathbf{k}$-algebras, the following statements are equivalent:
\begin{enumerate}
\renewcommand{\labelenumi}{(\alph{enumi})}
    \item There exists $m \in \mathbb{N}$ such that $\mathscr{V}$ satisfies
    \[
        \mathrm{st}_{2n}(x_1,\ldots,x_{2n}) \cdots \mathrm{st}_{2n}(x_{2(m-1)n+1},\ldots,x_{2mn});
    \]
    \item For each $\mathcal{A} \in \mathscr{V}$, the ideal generated by
    \[
        \mathrm{st}_{2n}(\mathcal{A}) := \{\mathrm{st}_{2n}(a_1,\ldots,a_{2n}) \mid a_1,\ldots,a_{2n} \in \mathcal{A}\}
    \]
    is nilpotent;
    \item For each $\mathcal{A} \in \mathscr{V}$, the ideal generated by the image of $\mathrm{Id}(\mathrm{M}_n(\mathbf{k}))$ under all homomorphisms $\mathbf{k}\langle X\rangle\to\mathcal{A}$ in $\mathcal{A}$ is nilpotent.
\end{enumerate}

All of these statements hold if $\mathscr{V}$ is generated by a finitely generated algebra. Moreover, any of them implies that the Grassmann algebra does not belong to $\mathscr{V}$.

If $\mathbf{k}$ is a field of characteristic zero, then $\mathscr{V}$ is generated by a finitely generated algebra if and only if it does not contain the Grassmann algebra. Hence, these equivalences provide a criterion to characterize $n$-nonmatrix varieties that do not contain the Grassmann algebra over a field of characteristic zero.
\end{Remark}

\section*{Acknowledgments}
Part of this work was carried out while the first-named author was visiting the University of Bari. The author gratefully acknowledges the support and the stimulating academic atmosphere provided by the University of Bari.

The authors would like to express their gratitude to Professor Ivan Pavlovich Shestakov for valuable discussions.


\begin{thebibliography}{X}
\bibitem{AGPR} E.~Aljadeff, A.~Giambruno, C.~Procesi, A.~Regev, \emph{Rings with polynomial identities and finite dimensional representations of algebras}. American Mathematical Society Colloquium Publications 66. American Mathematical Society, Providence, RI, 2020.

\bibitem{Amitsur_Nullstellensatz} S.A.~Amitsur, \emph{A generalization of Hilbert's Nullstellensatz}, Proceedings of the American Mathematical Society \textbf{8} (1957), 649--656.

\bibitem{Belov} A.Ya.~Belov, \emph{On a Shirshov basis of relatively free algebras of complexity $n$} (Russian), Mat. Sb. \textbf{135} (1988), 373--384. Translation: Math. USSR Sb. \textbf{63} (1988),
363--374.
\bibitem{BRT97} Y.~Billig, D.~Riley, V.~Tasi\'c, \emph{Nonmatrix varieties and nil-generated algebras whose units satisfy a group identity}, Journal of Algebra \textbf{190} (1997), 241--252.
\bibitem{Bit} V.~Bittencourt, \emph{Variedades não matriciais em certas
classes de álgebras não associativas}. Tese de doutorado (IME-USP), 2016.
\bibitem{C} G.~\v{C}ekanu, \emph{On local finiteness in varieties of associative algebras}, Mathematics of the USSR-Sbornik \textbf{41} (1982), 181--201.
\bibitem{C94} G.~\v{C}ekanu, \emph{Independence and quasiregularity in algebras}, Doklady Akademii Nauk.~\textbf{337}. Russian Academy of Sciences, 1994.
\bibitem{GWbook} J.~Gardner, R.~Wiegandt, \emph{Radical theory of rings}. CRC Press, 2003.
\bibitem{Drensky_Formanek} V.~Drensky, E.~Formanek, \emph{Polynomial Identity Rings}, Adv. Courses Math. CRM Barcelona, Birkhäuser Verlag, Basel, 2004.
\bibitem{KBR} A.~Kanel-Belov, L.~Rowen, \emph{The Braun-Kemer-Razmyslov Theorem for affine PI-algebras}, Chebyshevskiĭ Sb.\textbf{21} (2020), 89--128.
\bibitem{K79} A.~Kemer, \emph{Nonmatrix varieties}, Algebra and Logic \textbf{19} (1980), 157--178.
\bibitem{Kemerbook} A.~Kemer, \emph{Ideals of identities of associative algebras}. Transl. Math. Monogr., \textbf{87}, American Mathematical Society, Providence, RI, 1991.
\bibitem{L65} V.~Latyshev, \emph{Generalization of the Hilbert Theorem on the finiteness of bases}, Siberian Mathematical Journal \textbf{7} (1966), 1112--1113.
\bibitem{LI} V.~Latyshev, \emph{Complexity of nonmatrix varieties of associative algebras. I}, Algebra and Logic \textbf{16} (1977), 98--122.
\bibitem{LII} V.~Latyshev, \emph{Complexity of nonmatrix varieties of associative algebras. II}, Algebra and Logic \textbf{16} (1977), 122--133.
%\bibitem{L80} V.~Latyshev, \emph{Nonmatrix varieties of associative algebras, Matematicheskie Zametki \textbf{27} (1980), 147--156.}
\bibitem{MPR} S.~Mishchenko, V.~Petrogradsky, A.~Regev, \emph{Characterization of nonmatrix varieties of associative algebras}, Israel Journal of Mathematics \textbf{182} (2011), 337--348.
\bibitem{SB} I.~Shestakov, V.~Bittencourt, \emph{Nonmatrix varieties of nonassociative algebras}, Algebra and Logic, \textbf{62} (6) (2024), 532--547.
\bibitem{Ufnarovskii} V.A.~Ufnarovskii, \emph{An independence theorem and its consequences} (Russian), Mat. Sb. \textbf{128}, (1985) 124--132. Translation: Math. USSR Sb. \textbf{56} (1985), 121--129.
\end{thebibliography}
\end{document}